\documentclass[leqno]{article}
\usepackage{amsfonts}
\usepackage{eurosym}
\usepackage{amsmath}
\usepackage{amsmath}
\usepackage{amssymb}
\usepackage{graphicx}
\setcounter{MaxMatrixCols}{30}
\providecommand{\U}[1]{\protect\rule{.1in}{.1in}}
\providecommand{\U}[1]{\protect\rule{.1in}{.1in}}
\providecommand{\U}[1]{\protect\rule{.1in}{.1in}}
\providecommand{\U}[1]{\protect\rule{.1in}{.1in}}
\providecommand{\U}[1]{\protect\rule{.1in}{.1in}}
\providecommand{\U}[1]{\protect\rule{.1in}{.1in}}
\providecommand{\U}[1]{\protect\rule{.1in}{.1in}}
\newtheorem{theorem}{Theorem}

\newtheorem{proposition}[theorem]{Proposition}
\newtheorem{remark}[theorem]{Remark}

\newenvironment{proof}[1][Proof]{\noindent\textbf{#1.} }{\ \rule{0.5em}{0.5em}}
\begin{document}

\title{\textbf{Four-dimensional naturally
reductive pseudo-Riemannian spaces
}}
\author{\textsc{W. Batat, M. Castrill\'on L\'opez
and E. Rosado Mar\'\i a}}
\date{}
\maketitle

\begin{abstract}
The classification of 4-dimensional naturally reductive
pseudo-Rieman\ nian spaces is given. This classification comprises
symmetric spaces, the product of 3-dimensional naturally reductive
spaces with the real line and new families of indecomposable
manifolds which are studied at the end of the article. The
oscillator group is also analyzed from the point of view of this
classification.

\end{abstract}

\noindent The three authors have been partially supported by
MICINN, Spain, under grant MTM2011-22528.

\medskip

\noindent Mathematics Subject Classification 2010:\/ 53C30, 53C50.

\medskip

\noindent \textit{Key words and phrases:}\/Homogeneous spaces,
Naturally reductive, Pseudo-Rie\-mannian spaces.

\section{Introduction}

Homogeneous manifolds play a preeminent role in Differential
Geometry and have deserved thorough studies and classifications
from different perspectives. Among these spaces, naturally
reductive manifolds are possibly the simplest class besides the
class of Lie groups or symmetric spaces. This is probably due to
the fact that they generalize these spaces in a simple way.
Classifications of low dimensional naturally reductive Riemannian
homogeneous manifolds can be found in classical references. Beyond
the trivial result in surfaces, all connected and simply connected
3-dimensional naturally homogeneous spaces are give in \cite{TV}:
they comprise symmetric spaces together with the Lie groups
$SU(2)$, $\widetilde{\mathrm{SL}(2,\mathbb{R})}$ and the
Heisenberg group, endowed with convenient left invariant metrics.
The four dimensional case is tackled in \cite{KowVan1} where it is
proved that under the same topological conditions, a naturally
reductive Riemannian 4-manifold necessarily splits as a product of
a 3-dimensional naturally reductive manifold and $\mathbb{R}$. We
have to wait for the 5-dimensional case to get new indecomposable
naturally reductive manifolds (see \cite{KowVan2}).

The study of naturally reductive pseudo-Riemannian spaces also
deserves special attention. The classification in the
3-dimensional setting has been recently obtained in  \cite{Cal}, \cite{Bat}
where, again, the manifold is either symmetric, $SU(2)$,
$\widetilde{\mathrm{SL}(2,\mathbb{R})}$ or the Heisenberg group
with convenient metrics. The four dimensional case has attired
much interest in the literature (see for example \cite{Barco},
\cite{Ovando} where the structure of naturally reductive groups
are analyzed) probably because of the possible connections of
these spaces with plausible relativistic models. The goal of this
paper is to provide the complete classification of 4-dimensional
naturally reductive pseudo-Riemannian manifolds of $(1,3)$ or
$(2,2)$ signatures. Surprisingly, the main results (see Theorem
\ref{Loren} and \ref{dosdos}) show that, besides the product of a
3-dimensional naturally reductive manifold and $\mathbb{R}$, there
is a family of indecomposable manifolds. This situation has no
counterpart in the Riemannian case.

The structure of the article is as follows. We first review the
basic concepts and properties of naturally reductive manifolds,
specially those connected with the notion of homogeneous structure
tensors. We then follow the technique of Kowalski and Vanhecke,
although we cannot simply generalize \cite{KowVan1} due to the
existence of the new families mentioned above. At the end of the
article, we explore the geometry of these new manifolds to be sure
that they are indecomposable and non-symmetric. Finally, we apply
Theorem \ref{Loren} to the analysis of the 4-dimensional
oscillator group, probably the most relevant naturally reductive
Lorentzian example in the literature. We give a decomposition of
this space which is different to the one of its traditional
definition.

\section{Preliminaries}

\subsection{Naturally reductive spaces}

Let $(M,g)$ be a reductive pseudo-Riemannian homogeneous manifold
of dimension $n$. This means that $M=G/H$, where $G$ is connected
Lie group of isometries acting transitively and effectively on
$M$, $H$ is the isotropy of a point $o\in M$, and the Lie algebra
$\mathfrak{g}$ of $G$ admits a decomposition
$\mathfrak{g}=\mathfrak{h}\oplus\mathfrak{m}$ such that
$[\mathfrak{h,m}]\subset\mathfrak{m}$, where $\mathfrak{h}$ is the
Lie algebra of $H$. The mapping $A\mapsto A_{o}^{\ast} =
d/d\varepsilon|_{\varepsilon=0}\exp(\varepsilon A)\cdot o$ defines
an isomorphism between $\mathfrak{m}$ and $T_{o}M$ which, in
addition, is used to transfer the metric $g$ to $\mathfrak{m}$.
For convenience, along the article we will denote both the metric
in $T_oM$ and in $\mathfrak{m}$ by $\langle \cdot , \cdot
\rangle$. The decomposition of $\mathfrak{g}$ is said to be
naturally reductive if in addition
\begin{equation}
\langle\lbrack X,Y]_{\mathfrak{m}},Z\rangle
+\langle\lbrack X,Z]_{\mathfrak{m}},Y\rangle =0
\text{ for }X,Y,Z\in\mathfrak{m},
\label{NaturallyReductiveCondition}
\end{equation}
where $[\cdot,\cdot]_{\mathfrak{m}}$ is the $\mathfrak{m}$-part of
the bracket (see, \emph{e.g.}, \cite[Chapter X, section 3]{KN},
\cite[Chapter 11, Definition 23]{ONeill}). Let
$\widetilde{\nabla}$ be the canonical connection of the reductive
homogenous space $M=G/H$. It is well known that the torsion tensor
$\tilde{T}$ and the curvature tensor $\tilde{R}$ of
$\widetilde{\nabla}$ at the point $o$ read
\begin{equation}
\widetilde{T}(X,Y)_{o}=-[X,Y]_{\mathfrak{m}},
\quad\widetilde{R}(X,Y)_{o}=-[X,Y]_{\mathfrak{h}},
\quad\forall X,Y\in\mathfrak{m}.
\label{1}
\end{equation}
Recalling that $G$-invariant tensor fields on $M$
are parallel with respect to the connection
$\widetilde{\nabla}$, we have
\begin{equation}
\widetilde{\nabla}g=\widetilde{\nabla}\widetilde{R}
=\widetilde{\nabla }\widetilde{T}=0.
\label{2}
\end{equation}
Conditions (\ref{1}) and (\ref{2}) provide
interesting properties. First, the subalgebra
$\mathfrak{k}\subset\mathfrak{h}$ generated by
all projections
$[X,Y]_{\mathfrak{h}}=-\tilde{R}(X,Y)$,
$X,Y\in\mathfrak{m}$, belongs to the holonomy
algebra and hence its elements $A\in\mathfrak{k}$
act as derivation on the tensor algebra of $\mathfrak{m}$ and
\begin{equation*}
A\cdot g=A\cdot\widetilde{R}=A\cdot\widetilde{T}=0.
\label{3}
\end{equation*}
Second, the Bianchi identities
(see, \cite[Chapter III, Theorem 5.3]{KN}) become
\begin{align}
\mathfrak{S}_{X,Y,Z}\tilde{R}(X,Y)Z &
=\mathfrak{S}_{X,Y,Z}\tilde{T}(\tilde{T}(X,Y),Z),
\label{Bianchi1}\\
\mathfrak{S}_{X,Y,Z}\tilde{R}(\tilde{T}(X,Y),Z) &  =0,
\label{Bianchi2}
\end{align}
for all $X,Y,Z\in\mathfrak{m}$, where
$\mathfrak{S}_{X,Y,Z}$ denotes the
cyclic sum with respect to $X,Y,Z$.

With both tensors $\tilde{T}$ and $\tilde{R}$
we can recover two important objects.
On one hand, the Riemann curvature tensor
$R$ defined by the Levi-Civita connection
$\nabla$ at $T_{o}M$ satisfies the formula
\begin{equation}
R(X,Y)=\tilde{R}(X,Y)+[D_{X},D_{Y}]+D_{\tilde{T}(X,Y)},
\label{R}
\end{equation}
where $D_{X}$ denotes the difference
$(1,1)$-tensor $D_{X}=\nabla _{X}-\widetilde{\nabla}_{X}$,
which from (\ref{NaturallyReductiveCondition}) and (\ref{1}) is
\begin{equation}
D_{X}Y=-\tfrac{1}{2}\tilde{T}(X,Y).
\label{D}
\end{equation}
On the other hand (see \cite[Chapter 1, (1.79)]{TV}),
the brackets of the Lie algebra
$\mathfrak{g}=\mathfrak{m}\oplus\mathfrak{h}$
are defined as
\begin{equation}
\left\{
\begin{array}
[c]{ll}
\lbrack U,V]=UV-VU, & U,V\in\mathfrak{h},\\
\lbrack U,X]=U(X), & U\in\mathfrak{h},~X\in\mathfrak{m},\\
\lbrack X,Y]=-\tilde{T}(X,Y)+\widetilde{R}(X,Y), & X,Y\in\mathfrak{m}.
\end{array}
\right.
\label{bracket}
\end{equation}

\begin{remark}
A homogeneous structure tensor in a pseudo-Riemannian manifold
$(M,g)$ is a $(1,1)$-tensor $D$ satisfying (\ref{2}) for the
connection $\tilde{\nabla }=\nabla-D$ where $\nabla$ is the
Levi-Civita connection. By a classical result of Ambrose and
Singer (see \cite{AS}) a connected, simply connected and complete
manifold is reductive homogeneous if and only if it has a
homogenous structure tensor. The set of these tensors are
classified in three primitive classes invariant under the action
of the orthogonal group of the appropriate signature (see
\cite{Gadea0}). Tensors $D$ belonging to the class
$\mathcal{T}_{2}\oplus\mathcal{T}_{3}$ are those satisfying the
property $D_{X}Y+D_{Y}X=0$ and characterize natural reductivity.
The tensor in (\ref{D}) it is obviously in
$\mathcal{T}_2\oplus\mathcal{T}_3$.
\end{remark}

\begin{proposition}
\label{2p}If a pseudo-Riemannian manifold
$(M,g)$ admits the null tensor as a homogeneous
structure tensor, then it is locally symmetric.

If a naturally reductive homogeneous manifold
$(M,g)$ has null intrinsic curvature
$\tilde{R}$, then it is locally symmetric.
\end{proposition}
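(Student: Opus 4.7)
For the first claim the argument will be essentially tautological: the hypothesis that the null tensor $D = 0$ is a homogeneous structure means, by the definition recalled in the preceding remark, that the connection $\tilde\nabla = \nabla - D = \nabla$ satisfies $\tilde\nabla g = \tilde\nabla \tilde R = \tilde\nabla \tilde T = 0$, where $\tilde R$ and $\tilde T$ are its own curvature and torsion. Since $\tilde\nabla$ is here the Levi--Civita connection, its torsion vanishes and $\tilde R = R$, so the condition $\tilde\nabla \tilde R = 0$ reads $\nabla R = 0$, which is precisely local symmetry.

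For the second claim the goal is to deduce $\nabla R = 0$ from $\tilde R = 0$. The plan is to exploit formula (\ref{R}), which expresses $R$ algebraically in terms of $\tilde R$, $D$, and $\tilde T$, together with the parallelism relations (\ref{2}). First I would establish that $\tilde\nabla R = 0$: the cleanest route is to invoke that $R$ is $G$-invariant (being constructed from the $G$-invariant metric) and hence parallel for the canonical connection, but alternatively one can differentiate (\ref{R}) directly, using $\tilde\nabla \tilde T = 0$ (and hence $\tilde\nabla D = 0$ by (\ref{D})) along with $\tilde R = 0$. With this in hand, the identity $\nabla_X R = \tilde\nabla_X R + D_X \cdot R$, where the dot denotes the derivation action of the endomorphism $D_X$ on tensors, reduces the task to proving $D_X \cdot R = 0$ for every $X$.

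To obtain the latter I would verify that $D_X$ annihilates each of the building blocks of $R$ in (\ref{R}): the metric $g$ (immediate from the skew-adjointness $\langle D_X Y, Z\rangle + \langle Y, D_X Z\rangle = 0$, which is just a rewriting of natural reductivity via (\ref{D})), the tensor $\tilde R$ (trivial since $\tilde R = 0$), and the torsion $\tilde T$. The main obstacle will be precisely the identity $D_X \cdot \tilde T = 0$. My approach is to expand $(D_X \cdot \tilde T)(Y,Z)$ using $D_X W = -\tfrac{1}{2}\tilde T(X,W)$ and the antisymmetry of $\tilde T$, producing three terms of the form $\tilde T(\,\cdot\,, \tilde T(\,\cdot\,, \,\cdot\,))$; these should cancel by the first Bianchi identity (\ref{Bianchi1}), which under $\tilde R = 0$ reduces to the Jacobi-type relation $\mathfrak{S}_{X,Y,Z}\,\tilde T(\tilde T(X,Y), Z) = 0$. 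Once $D_X$ is shown to annihilate $g$, $\tilde R$, and $\tilde T$, it also annihilates their algebraic combination $R$, so $\nabla R = \tilde\nabla R + D\cdot R = 0$ and $(M,g)$ is locally symmetric.
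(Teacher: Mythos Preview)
Your proof of the first claim matches the paper's, which is indeed essentially tautological.

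For the second claim your argument is correct but follows a genuinely different route from the paper. The paper argues structurally: since $\tilde R(X,Y)=-[X,Y]_{\mathfrak h}$ vanishes, $\mathfrak m$ is itself a Lie subalgebra and one may take $\mathfrak h=0$; the universal cover of $M$ is then a Lie group with a left-invariant metric, and the naturally reductive condition becomes bi-invariance $g([X,Y],Z)+g(Y,[X,Z])=0$, which forces the space to be symmetric. Your approach instead stays at the tensor level and shows $\nabla R=0$ directly, splitting $\nabla=\tilde\nabla+D$ and checking that each piece kills $R$: $\tilde\nabla R=0$ by $G$-invariance, and $D_X\cdot R=0$ because $D_X$ annihilates the building block $\tilde T$ via the first Bianchi identity (\ref{Bianchi1}) with $\tilde R=0$. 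The paper's argument is shorter and yields the extra geometric information that $M$ is (covered by) a Lie group with bi-invariant metric; your argument is more self-contained, avoids passing to covers or invoking the classical fact about bi-invariant metrics, and makes the mechanism ($D_X$ acting as a derivation killing $\tilde T$) explicit.
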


\begin{proof}
If the homogeneous structure tensor $D$ vanishes,
then $\nabla R=0$ and the manifold is locally
symmetric. Similarly, if $\tilde{R}=0$, then $H$ is
discrete. The universal covering of $M$ is a
Lie group with an invariant metric satisfying
$g([X,Y],Z)+g([X,Z],Y)=0$, which is necessarily symmetric.
\end{proof}

\subsection{Decomposition of manifolds}

We now recall the following classical results.

\begin{theorem}
\label{Rham-Wu}\textbf{(de Rham-Wu decomposition) }
Let $(M,g)$ be a simply connected and complete
pseudo-Riemannian manifold. Then $(M,g)$ can be
decomposed as a pseudo-Riemannian product
\[
(M,g)\simeq(M_1,g_1)\times\cdots\times(M_k,g_k)
\]
where for each $(M_i,g_i)$ and any $x_i\in M_i$,
the tangent space $T_{x_i}M_i$ does not admit a
proper non-degenerate subspace, invariant
with respect to the holonomy. The decomposition above
is unique up to order of the factors. Moreover,
the connected components of the identity of the
isometry groups satisfy
\[
\mathcal{I}^0(M,g)
=\mathcal{I}^0(M_1,g_1)
\times\cdots\times
\mathcal{I}^0(M_k,g_k).
\]

\end{theorem}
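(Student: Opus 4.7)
The plan is to follow the classical route via the holonomy representation, with Wu's refinement of de Rham's argument that is needed in the indefinite signature. Fix a base point $x_0\in M$ and consider the action of the restricted holonomy group $\mathrm{Hol}^0(M,g)$ on $T_{x_0}M$ via parallel transport along loops. Because $g$ is parallel, this action preserves the (in general indefinite) inner product on $T_{x_0}M$. In the Riemannian case one would immediately decompose into orthogonal irreducible summands; here, since an invariant subspace $V\subset T_{x_0}M$ may be degenerate, I would invoke Wu's lemma: any holonomy-invariant non-degenerate subspace admits a holonomy-invariant non-degenerate complement, so by induction on the dimension one obtains a decomposition
\[
T_{x_0}M=V_1\oplus\cdots\oplus V_k
\]
into mutually orthogonal, holonomy-invariant, non-degenerate subspaces, each of which is \emph{weakly irreducible} in the sense that it contains no proper non-degenerate holonomy-invariant subspace. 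This is exactly the condition appearing in the statement.

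Next I would globalize the $V_i$'s. Since $M$ is simply connected, parallel transport along any path depends only on the endpoints modulo $\mathrm{Hol}^0$, so the $V_i$ extend to globally defined smooth distributions $\mathcal{D}_i\subset TM$ that are parallel with respect to $\nabla$. Because $\nabla$ is torsion-free and the $\mathcal{D}_i$ are $\nabla$-parallel, each $\mathcal{D}_i$ is involutive (for $X,Y$ sections of $\mathcal{D}_i$ one has $[X,Y]=\nabla_XY-\nabla_YX\in\mathcal{D}_i$), and by the Frobenius theorem integrates to a foliation with leaves $M_i$. Non-degeneracy of each $V_i$ ensures the induced metric $g_i$ on the leaves is a non-degenerate pseudo-Riemannian metric.

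The remaining step is to assemble the global product structure $(M,g)\simeq(M_1,g_1)\times\cdots\times (M_k,g_k)$. Completeness of $(M,g)$ is used to show that the leaves are themselves complete and that the natural map from a product of leaves (through $x_0$) to $M$ is a surjective local isometry; simple connectedness of $M$ then promotes this to a global isometry, using that each leaf inherits simple connectedness from the corresponding factor of the universal cover. Uniqueness of the decomposition up to order follows from the uniqueness of the weakly irreducible decomposition of $T_{x_0}M$ under the holonomy action, together with the fact that any de~Rham factor must be tangent to one of the $\mathcal{D}_i$. Finally, the statement about $\mathcal{I}^0(M,g)$ is obtained by observing that an isometry $\varphi$ of $(M,g)$ carries the holonomy decomposition to itself; by uniqueness it must permute the factors, and since it lies in the connected component of the identity it preserves each factor, so the induced action splits as a product of isometries of the $(M_i,g_i)$.

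The main obstacle is the first step: the indefiniteness of $g$ prevents the naive orthogonal-complement argument, and Wu's lemma itself requires a careful inductive analysis of invariant subspaces, separating the totally degenerate from the non-degenerate ones. Once that algebraic decomposition is in place, the remaining globalization and product-structure arguments are essentially those of the classical de~Rham theorem; the completeness hypothesis is indispensable there to rule out non-Hausdorff pathologies in the leaf space.
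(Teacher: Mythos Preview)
The paper does not give its own proof of this theorem; immediately after the statement it simply writes ``The proof of this result can be found, for example, in \cite{Wu}.'' Your outline is precisely the standard Wu argument referenced there (holonomy-invariant weakly irreducible orthogonal decomposition of $T_{x_0}M$, globalization to parallel distributions, Frobenius integration, and the completeness/simple-connectedness step to obtain the global product), so there is nothing to compare: your proposal agrees with the cited source the paper defers to.
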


The proof of this result can be found,
for example, in \cite{Wu}. As a
consequence of this result we have:

\begin{proposition}
Let $(M,g)$ be a pseudo-Riemannian manifold and
\[
(M,g)\simeq(M_1,g_1)\times\cdots\times(M_k,g_k)
\]
its de Rham-Wu decomposition. Then $(M,g)$
is a naturally reductive homogeneous space
if and only if each $(M_i,g_i)$
is a naturally reductive homogeneous space.
\end{proposition}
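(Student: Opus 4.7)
The argument splits by direction. The forward direction is straightforward: given naturally reductive $(M_i,g_i)=G_i/H_i$ with decompositions $\mathfrak{g}_i=\mathfrak{h}_i\oplus\mathfrak{m}_i$, take $G=G_1\times\cdots\times G_k$, $H=H_1\times\cdots\times H_k$, and $\mathfrak{m}=\bigoplus_i\mathfrak{m}_i$. Then $M=G/H$, the inclusion $[\mathfrak{h},\mathfrak{m}]\subset\mathfrak{m}$ follows from reductivity of each factor together with $[\mathfrak{g}_i,\mathfrak{g}_j]=0$ for $i\neq j$, and the inner product on $\mathfrak{m}\cong T_oM$ is the orthogonal sum of those on the $\mathfrak{m}_i$. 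All cross terms in (\ref{NaturallyReductiveCondition}) involving arguments from different factors vanish, so the condition decomposes into the naturally reductive conditions on the individual $\mathfrak{m}_i$, which hold by hypothesis.

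For the reverse direction I would work with the homogeneous structure tensor, using the characterization recalled in the Remark after Proposition \ref{2p}: let $D$ be the homogeneous structure tensor of $(M,g)$, satisfying $D_XY+D_YX=0$ and $\tilde\nabla g=\tilde\nabla R=\tilde\nabla D=0$ with $\tilde\nabla=\nabla-D$. The strategy is to show that $D$ respects the de Rham-Wu splitting $TM=\bigoplus_iTM_i$ and descends to a tensor $D^{(i)}$ on each $M_i$ of the same type. By Theorem \ref{Rham-Wu}, $\mathcal{I}^0(M,g)=\prod_i\mathcal{I}^0(M_i,g_i)$, so every Killing field of $M$ splits as $\sum_iV_i$ with $V_i$ a Killing field of $M_i$; in particular, for each $X\in\mathfrak{m}\cong T_oM$ the fundamental vector field $X^*$ splits this way, making $(\nabla X^*)(o)\in\mathfrak{so}(T_oM)$ block-diagonal with respect to $T_oM=\bigoplus_iT_{o_i}M_i$. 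Since the canonical connection of a reductive space satisfies $\tilde\nabla_ZX^*|_o=0$, we get $(\nabla X^*)(o)Z=D_ZX$, so the map $Z\mapsto D_ZX$ preserves the splitting. Combined with the antisymmetry $D_ZX=-D_XZ$, comparing which factor each side lies in forces $D_XY=0$ whenever $X,Y$ lie in different factors, while $D_XY\in T_{o_i}M_i$ for $X,Y\in T_{o_i}M_i$.

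To finish, one descends $D$ to each factor: for $Y\in TM_j$ and $X,Z\in TM_i$ with $i\neq j$, the equation $\tilde\nabla_YD=0$ collapses to $\nabla_Y(D(X,Z))=0$ (using the vanishing of $D$ on mixed arguments just proved, together with $\nabla$ preserving the product decomposition), so $D(X,Z)\in TM_i$ is independent of the $M_j$-coordinate and defines a tensor $D^{(i)}$ on $M_i$. Each $D^{(i)}$ inherits the antisymmetry $D^{(i)}_XY+D^{(i)}_YX=0$, so by Ambrose-Singer each $(M_i,g_i)$ is naturally reductive. The main obstacle is precisely this decomposition of $D$: the de Rham-Wu theorem splits the Levi-Civita connection but not a priori the auxiliary metric connection $\tilde\nabla$, and one must invoke the splitting of the Killing algebra (equivalently, invariance of $D$ under the transvection group) in order to transfer the geometric decomposition onto $D$ itself.
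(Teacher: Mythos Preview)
The paper does not actually prove this proposition: it is stated without argument immediately after the de Rham--Wu theorem, prefaced only by ``As a consequence of this result we have''. The intended justification is clearly the last clause of Theorem~\ref{Rham-Wu}, the product decomposition $\mathcal{I}^0(M,g)=\prod_i\mathcal{I}^0(M_i,g_i)$, which is exactly the ingredient you build on. So there is nothing to compare against beyond confirming that your argument really does extract the proposition from that fact.

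Your strategy for the nontrivial direction is the right one, but one step is wrong as written. The assertion that the canonical connection satisfies $\tilde\nabla_ZX^*|_o=0$ for $X\in\mathfrak m$ is false. For the fundamental field $X^*$ one has $A_{X^*}:=\mathcal L_{X^*}-\tilde\nabla_{X^*}$ vanishing at $o$ (test it on $G$-invariant extensions, which are both $\mathcal L_{X^*}$-invariant and $\tilde\nabla$-parallel), and combining this with the torsion identity $\tilde\nabla_ZV=\tilde T(Z,V)-A_V(Z)$ gives
\[
(\tilde\nabla_ZX^*)_o=\tilde T(Z,X)_o=[X,Z]_{\mathfrak m},
\]
which is generically nonzero. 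Consequently $(\nabla_ZX^*)_o=[X,Z]_{\mathfrak m}+D_ZX=D_XZ$, not $D_ZX$. The discrepancy is only a sign, so $(\nabla X^*)_o$ is still the endomorphism $Z\mapsto D_XZ$; its block-diagonality (from the splitting of Killing fields) gives $D_X(T_{o_j}M_j)\subset T_{o_j}M_j$ for every $X$, and your antisymmetry argument then forces $D_XY=0$ on mixed arguments exactly as you wrote. With this correction the remainder of your descent argument is fine. (A minor point: what you call the ``forward'' direction is the converse of the implication as stated.)
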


\begin{proposition}
\label{descomposicion}Let $(M,g)$ be a connected
and simply connected naturally reductive homogeneous
pseudo-Riemannian manifold and let $o\in M$.
Suppose that $T_{o}M=W\oplus W^{\perp}$ and that
\begin{align}
\tilde{T}(\pi_iX,\pi_iY)  &
=\pi_i\tilde{T}(X,Y),
\label{ProyT}\\
\tilde{R}(\pi_iX,\pi_iY)\pi_iZ  &
=\pi_i\tilde{R}(X,Y)Z,
\label{ProyR}
\end{align}
for all $X,Y,Z\in T_{o}M$, where
$\pi_{1}\colon T_{o}M\rightarrow W$ and
$\pi_{2}\colon T_{o}M\rightarrow W^{\perp}$
are the natural projections. Then $M$ is
the pseudo-Riemannian product of two
naturally reductive homogeneous
pseudo-Riemannian manifolds.
\end{proposition}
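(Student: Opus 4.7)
\noindent\textit{Proof plan.} The strategy is to extend the algebraic splitting $T_oM=W\oplus W^\perp$ to a pair of complementary, non-degenerate, $\nabla$-parallel distributions on $M$ and then invoke the de Rham--Wu decomposition (Theorem~\ref{Rham-Wu}). First I would unpack (\ref{ProyT})--(\ref{ProyR}) at $o$: for $X,Y\in W$ the projections $\pi_1,\pi_2$ act as identity and zero, so (\ref{ProyT}) gives $\tilde T(X,Y)\in W$, and for $X\in W$, $Y\in W^\perp$ both projections kill one factor, forcing $\tilde T(X,Y)=0$. Entirely analogous ``block'' behaviour holds for $\tilde R$, so in particular every operator $[X,Y]_{\mathfrak{h}}=-\tilde R(X,Y)$ stabilises $W$ and the subalgebra $\mathfrak{k}\subset\mathfrak{h}$ preserves the decomposition $W\oplus W^\perp$.

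Next I would globalise. By the Ambrose--Singer theorem applied to the canonical connection (whose curvature $\tilde R$ is $\widetilde\nabla$-parallel), the restricted holonomy Lie algebra of $\widetilde\nabla$ at $o$ coincides with $\mathfrak{k}$, and the assumed simple connectedness of $M$ promotes this to the full $\widetilde\nabla$-holonomy. Hence $\widetilde\nabla$-parallel transport of $W$ along any path from $o$ is path-independent and assembles into smooth $\widetilde\nabla$-parallel distributions $\mathcal{W},\mathcal{W}^\perp\subset TM$. Since $\tilde T$ and $\tilde R$ are $\widetilde\nabla$-parallel by (\ref{2}), the pointwise block identities proved at $o$ propagate to every $p\in M$. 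Using $D_XY=-\tfrac{1}{2}\tilde T(X,Y)$ from (\ref{D}), the same block behaviour transfers to $D$; because $\nabla=\widetilde\nabla+D$, the distributions $\mathcal{W}$ and $\mathcal{W}^\perp$ are actually $\nabla$-parallel.

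Then I would appeal to completeness: naturally reductive homogeneous pseudo-Riemannian manifolds are geodesically complete, because their geodesics through $o$ are precisely the curves $t\mapsto\exp(tX)\cdot o$ with $X\in\mathfrak{m}$, defined for all $t\in\mathbb{R}$. Thus Theorem~\ref{Rham-Wu} applied to the parallel splitting $TM=\mathcal{W}\oplus\mathcal{W}^\perp$ yields an isometric decomposition $(M,g)\simeq(M_1,g_1)\times(M_2,g_2)$ with $T_{o_1}M_1\simeq W$ and $T_{o_2}M_2\simeq W^\perp$. On each factor, the restrictions of $\widetilde\nabla$, $\tilde T$, $\tilde R$ and $D_i=\nabla_i-\widetilde\nabla_i$ define a homogeneous structure tensor still satisfying $(D_i)_XY+(D_i)_YX=0$, so by the Ambrose--Singer criterion and the $\mathcal{T}_2\oplus\mathcal{T}_3$ characterisation of natural reductivity recalled in the remark above, each $(M_i,g_i)$ is naturally reductive.

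The most delicate step is the holonomy argument of the second paragraph: one must invoke Ambrose--Singer for $\widetilde\nabla$ together with $\widetilde\nabla$-parallelism of $\tilde R$ to see that the canonical-connection holonomy at $o$ is exhausted by $\mathfrak{k}$, and then use simple connectedness to upgrade ``the restricted holonomy preserves $W$'' to ``$\widetilde\nabla$-parallel transport of $W$ is globally well-defined and produces a smooth distribution''. Once these distributions are in hand, the passage from $\widetilde\nabla$-parallelism to $\nabla$-parallelism is purely algebraic via (\ref{D}).
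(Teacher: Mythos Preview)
Your proposal is correct and follows essentially the same route the paper intends: the paper does not give a self-contained argument but simply refers to the Riemannian proof of Kowalski--Vanhecke \cite{KowVan2} and observes that the only new ingredient in arbitrary signature is the non-degeneracy of $W$, which is guaranteed by the hypothesis $T_oM=W\oplus W^\perp$ and is needed precisely in order to invoke the de~Rham--Wu theorem. Your outline makes all of this explicit---the holonomy computation for $\widetilde\nabla$ via Ambrose--Singer and the $\widetilde\nabla$-parallelism of $\tilde R$, the passage from $\widetilde\nabla$- to $\nabla$-parallelism through (\ref{D}), completeness of naturally reductive spaces, and the fact that the restricted tensor $D_i$ on each factor stays in $\mathcal{T}_2\oplus\mathcal{T}_3$---and these are exactly the steps underlying the cited Riemannian argument.
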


This result is proved for the Riemannian case
in \cite{KowVan2}. The proof for the case of
arbitrary signature is similar, with the only
difference that one has to ensure the non
degeneracy of the restriction of the metric $g$
to $W\subset T_{o}M$, in order to apply the
de Rham-Wu Theorem. This condition is
satisfied as $T_{o}M=W\oplus W^{\perp}$.
We also have the following result:

\begin{proposition}
\label{Decomposition} If
$W=\mathrm{span}\{\tilde{T}(X,Y)~|~X,Y\in T_{o}M\}$
is a proper non-degenerate space then the conditions
of Proposition \ref{descomposicion} are satisfied
and the manifold $M$ is decomposable.
\end{proposition}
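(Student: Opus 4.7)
The plan is to verify the two projection identities (\ref{ProyT}) and (\ref{ProyR}) at the base point $o$ and then invoke Proposition \ref{descomposicion}. The starting point is that the natural reductivity condition (\ref{NaturallyReductiveCondition}), combined with the antisymmetry of $\tilde{T}$ in its two arguments, makes $\langle\tilde{T}(X,Y),Z\rangle$ totally antisymmetric in $X,Y,Z$. Since $\tilde{T}(X,Y)\in W$ by definition, this pairing vanishes whenever $Z\in W^\perp$, and total antisymmetry propagates the vanishing to either of the other two slots. Non-degeneracy of $g$ on $T_oM$ then gives $\tilde{T}(V',\,\cdot)=0$ for every $V'\in W^\perp$. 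Expanding $\tilde{T}(X,Y)$ along the decomposition $T_oM=W\oplus W^\perp$, only the $W$-$W$ piece survives and it already lies in $W$, so (\ref{ProyT}) follows immediately.

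The crux of the argument is the claim that $\tilde{R}(V,V')=0$ as an endomorphism whenever $V\in W$ and $V'\in W^\perp$. To prove it, I would apply the second Bianchi identity (\ref{Bianchi2}) to arbitrary $X,Y\in T_oM$ and $V'\in W^\perp$: the two cyclic terms containing $\tilde{T}(Y,V')$ or $\tilde{T}(V',X)$ drop out by the previous step, leaving $\tilde{R}(\tilde{T}(X,Y),V')=0$. Under the hypothesis $W=\mathrm{span}\{\tilde{T}(X,Y)\}$, bilinearity in the first slot extends this to $\tilde{R}(V,V')=0$ for every $V\in W$.

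With this off-diagonal block of $\tilde{R}$ killed, the remaining pieces $\tilde{R}(W,W)$ acting on $W^\perp$ and $\tilde{R}(W^\perp,W^\perp)$ acting on $W$ are dispatched by the first Bianchi identity (\ref{Bianchi1}): in each case every torsion pair on the right-hand side involves a $W^\perp$ entry, so that side vanishes, and two of the three curvature terms on the left have just been shown to be zero, forcing the third. Finally, every $A\in\mathfrak{k}$ preserves both $W$ and $W^\perp$ (it is $g$-skew-symmetric and, acting as a derivation that annihilates $\tilde{T}$, preserves the image-span $W$). A direct expansion of $\tilde{R}(X,Y)Z$ with $X=\pi_1X+\pi_2X$, $Y=\pi_1Y+\pi_2Y$, $Z=\pi_1Z+\pi_2Z$ then collapses to $\tilde{R}(X,Y)Z=\tilde{R}(\pi_1X,\pi_1Y)\pi_1Z+\tilde{R}(\pi_2X,\pi_2Y)\pi_2Z$, with the two summands lying in $W$ and $W^\perp$ respectively; this is exactly (\ref{ProyR}). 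Proposition \ref{descomposicion} then delivers the decomposition.

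I expect the substantive step to be the second one: it is precisely the spanning hypothesis $W=\mathrm{span}\{\tilde{T}(X,Y)\}$ that upgrades the elementwise relation $\tilde{R}(\tilde{T}(X,Y),W^\perp)=0$ into the full endomorphism identity $\tilde{R}(W,W^\perp)=0$; without it the off-diagonal block of $\tilde{R}$ could not be eliminated, and the remaining Bianchi arguments would not close up.
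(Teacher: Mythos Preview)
Your argument is correct and tracks the paper's proof through the first three steps: the total skew-symmetry of $\langle\tilde{T}(\cdot,\cdot),\cdot\rangle$ gives $\tilde{T}(W^\perp,\cdot)=0$ and hence (\ref{ProyT}); the second Bianchi identity (\ref{Bianchi2}) kills $\tilde{R}(W,W^\perp)$; and the first Bianchi identity (\ref{Bianchi1}) then kills $\tilde{R}(W,W)W^\perp$ and $\tilde{R}(W^\perp,W^\perp)W$.

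Where you diverge is in the final invariance step, showing $\tilde{R}(W,W)W\subset W$ and $\tilde{R}(W^\perp,W^\perp)W^\perp\subset W^\perp$. The paper passes through the Levi-Civita curvature: since the $D$-terms in (\ref{R}) vanish on the relevant combinations, one has $\langle R(X,Y)Z,U\rangle=\langle\tilde{R}(X,Y)Z,U\rangle$ for $X,Y,Z\in W$, $U\in W^\perp$, and then the pair-symmetry $\langle R(X,Y)Z,U\rangle=\langle R(Z,U)X,Y\rangle$ of the Riemann tensor reduces this to a case already shown to vanish. You instead invoke the derivation property $\tilde{R}(X,Y)\cdot\tilde{T}=0$, which forces $\tilde{R}(X,Y)$ to preserve the image-span $W$ (and, by $g$-skewness, $W^\perp$). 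Your route is the cleaner of the two: it stays entirely within the algebra of $\tilde{T}$ and $\tilde{R}$ and avoids the detour through the Levi-Civita curvature and its symmetries. The paper's route, on the other hand, does not use the derivation property and would still go through in contexts where that property is not at hand.
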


\begin{proof}
From (\ref{NaturallyReductiveCondition}) we have
$\tilde{T}(X,Y)=0$ if $Y\in W^{\perp}$ and
therefore (\ref{ProyT}) is satisfied.

We now check that condition (\ref{ProyR})
is also satisfied. If
$X=\tilde{T}(U,V)\in W$, $Z\in W^{\perp}$,
using (\ref{Bianchi2}) with $U,V,Z$ gives
$\tilde{R}(X,Z)=0$. Now, if $X,Y\in W$ and
$Z\in W^{\perp}$ from (\ref{Bianchi1}) we get
$\tilde{R}(X,Y)Z=0$ analogously $\tilde{R}(X,Y)Z=0$
for $X,Y\in W^{\perp}$, $Z\in W$. Finally,
taking (\ref{R}) into account we get
$\langle R(X,Y)Z,U\rangle=\langle\tilde{R}(X,Y)Z,U\rangle$
and $\langle R(X,Y)U,Z\rangle=\langle\tilde{R}(X,Y)U,Z\rangle$
for any $U\in W^{\perp}$ and from the symmetries of
the Riemann curvature tensor we have
$\langle\tilde{R}(X,Y)Z,U\rangle=0$ for any
$X,Y,Z\in W$ and hence $\tilde{R}(X,Y)Z\in W$.

If $X,Y,Z\in W^{\perp}$, $U\in W$, from (\ref{R})
we have
$\langle R(X,Y)Z,U\rangle=\langle\tilde{R}(X,Y)Z,U\rangle$, and
$\langle R(Z,U)X,Y\rangle=\langle\tilde{R}(Z,U)X,Y\rangle$.
From the symmetries of the Riemann curvature tensor
we have
$\langle\tilde{R}(X,Y)Z,U\rangle
=\langle\tilde{R}(Z,U)X,Y\rangle$
but the right hand side vanishes. Hence
$\langle\tilde{R}(X,Y)Z,U\rangle=0$, $\forall U\in W$
and then $\tilde{R}(X,Y)Z\in W^{\perp}$.
\end{proof}

\subsection{Normal forms of skew-adjoint operators}

Let $(V,\langle\cdot,\cdot\rangle)$ be a
metric vector space and let $A\colon V\rightarrow V$
be a skew-symmetric linear endomorphism, that is an
endomorphism satisfying
\[
\langle A(u),v\rangle=-\langle u,A(v)\rangle,
\qquad\forall u,v\in V.
\]
If $A(W)\subset W$ for a subspace $W\subset V$
for which the restriction of the metric is
non-degenerate, then $A(W^{\perp})\subset W^{\perp}$
and we can decompose $V=W\oplus W^{\perp}$.
In this case, the endomorphism $A$ is said to
be reducible. If there is no such an invariant
non-degenerate subspace $W$, we say that $A$ is irreducible.

\begin{proposition}
\label{Lorentz}Let $(V,\langle\cdot,\cdot\rangle)$
be a $4$-dimensional Lorentzian vector space.
For any skew-symmetric endomorphism $A\colon V\rightarrow V$,
there exists an orthonormal basis $\mathcal{B}$ of $V$ with
respect to which the matrix of $\langle\cdot,\cdot\rangle$
is $\mathrm{diag}(-1,1,1,1)$ and the matrix of
$A$ is one of the following types:

\begin{description}
\item[a)] $A_1=\pm\left(
\begin{array}
[c]{rrrr}
0 & 0 & 1 & 0\\
0 & 0 & 1 & 0\\
1 & -1 & 0 & 0\\
0 & 0 & 0 & 0
\end{array}
\right)  $,

\item[b)] $A_{\alpha\beta}
=\alpha A_{2}+\beta A_{3}$, with
$\alpha,\beta \in\mathbb{R}$ and
\[
A_2=\left(
\begin{array}
[c]{rrrr}
0 & 1 & 0 & 0\\
1 & 0 & 0 & 0\\
0 & 0 & 0 & 0\\
0 & 0 & 0 & 0
\end{array}
\right)  ,\qquad A_{3}=\left(
\begin{array}
[c]{rrrr}
0 & 0 & 0 & 0\\
0 & 0 & 0 & 0\\
0 & 0 & 0 & 1\\
0 & 0 & -1 & 0
\end{array}
\right)  .
\]

\end{description}
\end{proposition}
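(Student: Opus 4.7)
The plan is to split on whether $A$ admits an $A$-invariant non-degenerate $2$-plane; the two cases produce the normal forms b) and a), respectively.

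\emph{Case 1.} Suppose $W\subset V$ is an $A$-invariant non-degenerate $2$-plane. Then $V=W\oplus W^{\perp}$ is an $A$-invariant orthogonal decomposition into $2$-planes and, since $V$ has signature $(1,3)$, exactly one of $W$, $W^{\perp}$ is Lorentzian and the other is positive-definite. A skew-symmetric endomorphism of a Lorentzian $2$-plane has matrix $\alpha A_{2}$ in any orthonormal basis whose first vector is timelike (the skew condition forces the off-diagonal entries to coincide), while on a definite $2$-plane it has matrix $\beta A_{3}$. Concatenating the two orthonormal bases yields form b).

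\emph{Case 2.} Assume $A$ admits no invariant non-degenerate $2$-plane. I would first show $A$ is nilpotent by ruling out nonzero eigenvalues. A real eigenvalue $a\neq 0$ produces null real eigenvectors for $\pm a$ whose span is either $2$-dimensional Lorentzian (excluded) or $2$-dimensional totally null (excluded, since the Witt index of $V$ is $1$). A pure imaginary eigenvalue $ib\neq 0$ yields an $A$-invariant real $2$-plane $\mathrm{span}\{\mathrm{Re}(v),\mathrm{Im}(v)\}$ that is either Euclidean (excluded) or totally null (excluded). A complex eigenvalue $a+ib$ with $ab\neq 0$ gives four distinct conjugate/opposite eigenvalues, and the orthogonality relation $(\lambda+\mu)\langle v_{\lambda},v_{\mu}\rangle=0$, valid for skew-symmetric $A$, forces the real $2$-plane $(V_{\lambda}\oplus V_{\bar\lambda})\cap V$ to be totally null, again excluded. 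Hence $A$ is nilpotent.

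Next I would pin down the Jordan type. The identity $\mathrm{Im}(A)=\ker(A)^{\perp}$, valid for every skew-symmetric $A$, together with the Witt bound forbids types $[4]$ and $[2,2]$, which would both create a $2$-dimensional totally null invariant subspace. The trivial type $A=0$ and type $[2,1,1]$ both provide an invariant non-degenerate $2$-plane (in the latter $\mathrm{span}\{v,Av\}$ for any non-null $v\notin\ker(A)$ is Lorentzian), contradicting Case 2. Thus $A$ has Jordan type $[3,1]$: $\mathrm{rank}(A)=2$, and $\ker(A)$ is a $2$-plane with $1$-dimensional null radical $\mathrm{span}\{A^{2}v\}$.

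Finally I would build the orthonormal basis. Choose $w\in\ker(A)$ complementary and orthogonal to $A^{2}v$; then $\langle w,w\rangle\neq 0$, since otherwise $\ker(A)$ would be totally null. If $w$ were timelike, $w^{\perp}$ would be $3$-dimensional Euclidean and the nonzero $A|_{w^{\perp}}$ would contain a $2$-dimensional rotation block, giving a forbidden invariant non-degenerate $2$-plane in $V$; so $w$ must be spacelike, and I normalise it to $e_{4}$. The complement $w^{\perp}$ is Lorentzian of signature $(1,2)$ and $A|_{w^{\perp}}$ is a $3$-dimensional null rotation, which in a null basis $(n,\bar n,s)$ of $w^{\perp}$ takes the form $An=0$, $As=cn$, $A\bar n=-cs$ for some $c\neq 0$; passing to an orthonormal basis $(e_{1},e_{2},e_{3})$ of $w^{\perp}$ with $e_{1}$ timelike and absorbing the scalar $c$ by a Lorentz boost in the $(e_{1},e_{2})$-plane brings the full matrix to $\pm A_{1}$. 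The main obstacle I expect is this last step---checking that the non-null direction in $\ker(A)$ is necessarily spacelike and that the residual boost freedom exactly rescales the null-rotation constant to $\pm 1$; the eigenvalue and Jordan analyses are mostly bookkeeping once the identity $\mathrm{Im}(A)=\ker(A)^{\perp}$ is in hand.
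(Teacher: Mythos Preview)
Your strategy is sound and, modulo one error, reaches the right conclusion, but it differs from the paper's proof. The paper does not analyse eigenvalues or Jordan types at all; it simply quotes the result of Singer and Steinberg \cite{SS} that every skew-symmetric endomorphism of a $4$-dimensional Lorentzian space is reducible, writes $V=W\oplus W^{\perp}$ with $W$ Lorentzian and $A|_{W}$ irreducible, and then splits on $\dim W\in\{1,2,3\}$, reading off the normal forms from the known low-dimensional classifications. Your argument is more self-contained---it effectively reproves the relevant part of \cite{SS}---at the cost of being longer.

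There is, however, a genuine error in your handling of Jordan type $[2,1,1]$. You assert that $\mathrm{span}\{v,Av\}$ is Lorentzian for any non-null $v\notin\ker(A)$, but this plane is always \emph{degenerate}: skew-symmetry gives $\langle v,Av\rangle=0$, and $A^{2}=0$ gives $\langle Av,Av\rangle=-\langle v,A^{2}v\rangle=0$, so the Gram matrix is $\mathrm{diag}(\langle v,v\rangle,0)$. The conclusion you want is nonetheless true, via a different subspace: since $\mathrm{Im}(A)=\ker(A)^{\perp}$ is a null line $\mathbb{R}n$, the $3$-space $\ker(A)=n^{\perp}$ has positive-definite quotient $n^{\perp}/\mathbb{R}n$ in signature $(1,3)$, and any spacelike $2$-plane inside $\ker(A)$ is trivially $A$-invariant and non-degenerate. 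With this correction your Case~2 does isolate the $[3,1]$ type. (A smaller point: once $A$ is known to be nilpotent, the argument that $w$ must be spacelike is simpler than you indicate---a timelike $w\in\ker(A)$ would make $w^{\perp}$ Euclidean yet containing the null vector $A^{2}v$, which is impossible; invoking a ``rotation block'' is misleading since a nilpotent skew endomorphism of a definite space is zero.)
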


\begin{proof}
From \cite{SS}, every skew-symmetric
transformation in a $4$-dimensional
manifold is reducible. We then have
$V=W\oplus W^{\perp}$ where $W$ is
Lorentzian and $W^{\perp}$ Riemannian.
We consider that $A|_{W}$ is irreducible.

If $\dim W=1$, then $A|_{W}=(0)$ and
there is an orthonormal basis in $W^{\perp}$
such that $A|_{W^{\perp}}$ is a Riemannian skew-symmetric
endomorphism. We then get that $A$ is as in the case b)
with $\alpha=0$.

If $\dim W=2$, then (see \cite{SS}) there are
orthonormal basis in $W$ and $W^{\perp}$
for which the matrices of $A|_{W}$ and $A|_{W^{\perp}}$
are respectively
\[
\left(
\begin{array}
[c]{cc}
0 & \alpha\\
\alpha & 0
\end{array}
\right)  ,\alpha\neq0,\qquad\left(
\begin{array}
[c]{cc}
0 & \beta\\
-\beta & 0
\end{array}
\right)  ,\beta\in\mathbb{R},
\]
and we recover the matrices in the case b)
with $\alpha\neq0$.

If $\dim W=3$, then (see \cite{SS})
there is a basis such that $A|_{W}$
defines a matrix as the top left $3\times 3$
submatrix in a), so that the proof is complete.
\end{proof}

\begin{proposition}
\label{Neutral} Let $(V,\langle\cdot,\cdot\rangle)$
be a $4$-dimensional vector space with a
$(2,2)$-signature metric. Let $A\colon V\rightarrow V$
be a skew-symmetric endomorphism.
Then we have:

\noindent If $A$ is reducible, then
there is an orthonormal basis $\mathcal{B}$ of $V$
with respect to which the matrix of
$\langle\cdot ,\cdot\rangle$ is
$\mathrm{diag}(-1,-1,1,1)$ and the matrix of $A$
is one of the following
\begin{align*}
\text{\emph{a1)} }A_{1}  &  =\left(
\begin{array}
[c]{rrrr}
0 & 0 & 0 & 0\\
0 & 0 & 0 & 1\\
0 & 0 & 0 & 1\\
0 & 1 & -1 & 0
\end{array}
\right)  ,
\quad\text{\emph{a2)} }A_2=\left(
\begin{array}
[c]{rrrr}
0 & \alpha & 0 & 0\\
-\alpha & 0 & 0 & 0\\
0 & 0 & 0 & \beta\\
0 & 0 & -\beta & 0
\end{array}
\right)  ,
\text{ }\beta\neq0,\\
\text{\emph{a3)} }A_3  &  =\left(
\begin{array}
[c]{cccc}
0 & 0 & \beta & 0\\
0 & 0 & 0 & \alpha\\
\beta & 0 & 0 & 0\\
0 & \alpha & 0 & 0
\end{array}
\right)  ,
\text{ }\alpha\neq 0.
\end{align*}
\noindent If $A$ is irreducible, then
there is a basis $\mathcal{B}$ of $V$
with respect to which the matrix of
$\langle\cdot,\cdot\rangle$ is
\[
\left(
\begin{array}
[c]{rrrr}
0 & 0 & 0 & -1\\
0 & 0 & 1 & 0\\
0 & 1 & 0 & 0\\
-1 & 0 & 0 & 0
\end{array}
\right)
\]
and the matrix of $A$ is one of the following
\begin{align*}
\text{\emph{b1)} }B_{1}  &  =\left(
\begin{array}
[c]{rrrr}
0 & -\nu & 1 & 0\\
\nu & 0 & 0 & 1\\
0 & 0 & 0 & -\nu\\
0 & 0 & \nu & 0
\end{array}
\right)  ,
\quad\text{\emph{b2) }}B_{2}=\left(
\begin{array}
[c]{rrrr}
\lambda & 0 & 1 & 0\\
0 & -\lambda & 0 & 1\\
0 & 0 & \lambda & 0\\
0 & 0 & 0 & -\lambda
\end{array}
\right)  ,
\ \lambda\neq0,\\
\text{\emph{b3) }}B_{3}  &  =\left(
\begin{array}
[c]{cccc}
\xi & \nu & 0 & 0\\
-\nu & \xi & 0 & 0\\
0 & 0 & -\xi & \nu\\
0 & 0 & -\nu & -\xi
\end{array}
\right)  ,\
\xi,\nu\neq0.
\end{align*}

\end{proposition}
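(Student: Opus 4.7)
The plan is to follow the strategy of Proposition \ref{Lorentz}: handle the reducible case by repeatedly splitting off invariant non-degenerate subspaces until every piece has dimension at most $3$, and handle the irreducible case by spectral analysis of $A$ over $\mathbb{C}$ together with the constraints imposed by skew-symmetry.

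For the reducible case, since any non-degenerate invariant subspace $W$ has a non-degenerate invariant complement $W^{\perp}$, one iteratively decomposes $V$ into a direct sum whose pieces are either one-dimensional or $A$-irreducible of dimension $2$ or $3$. Signature $(2,2)$ together with the dimension constraint sharply restricts the combinations. An irreducible $3$-dimensional piece must be Lorentzian, since on a definite $3$-space every skew-symmetric endomorphism has a $1$-dimensional kernel and is therefore reducible; this yields the $3+1$ split of case \emph{a1)}, where the Lorentzian $3$-dimensional normal form from \cite{SS} produces the top-left $3\times 3$ submatrix of $A_{1}$. A $2+2$ split compatible with signature $(2,2)$ is possible in exactly two ways: two definite pieces of opposite signs, yielding case \emph{a2)} via the Euclidean rotation form on each block, or two pieces of signature $(1,1)$, yielding case \emph{a3)} via the hyperbolic form on each block. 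Decompositions with smaller summands ($1+1+1+1$ or $2+1+1$) are absorbed into \emph{a2)} or \emph{a3)} by setting $\alpha$ or $\beta$ to zero.

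For the irreducible case, every proper $A$-invariant subspace must be degenerate. Writing skew-symmetry as $A^{t}G=-GA$ for the Gram matrix $G$ shows $A$ is similar to $-A$, so its complex spectrum is invariant under $\lambda\mapsto-\lambda$ and (by reality) under $\lambda\mapsto\bar{\lambda}$. A direct computation with the skew-symmetry gives $\langle u,v\rangle=0$ whenever $u,v$ lie in generalized eigenspaces for $\lambda,\mu$ with $\lambda+\mu\neq 0$; hence the $\lambda$- and $(-\lambda)$-generalized eigenspaces are dual under the pairing. If the spectrum contained a real semisimple pair $\pm\lambda$, or an imaginary semisimple pair $\pm i\nu$ with one-dimensional complex eigenspaces, standing apart from the rest of the spectrum, the associated real $2$-plane would be non-degenerate and $A$-invariant, contradicting irreducibility. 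Enumerating the remaining possibilities in complex dimension $4$ leaves exactly three: a conjugate pair $\pm i\nu$ with a Jordan block of size $2$, yielding $B_{1}$; a real pair $\pm\lambda$ with a Jordan block of size $2$, yielding $B_{2}$; and four simple complex eigenvalues $\pm\xi\pm i\nu$ with $\xi,\nu\neq 0$, yielding $B_{3}$. The fact that $B_{1}|_{\nu=0}$ and $B_{2}|_{\lambda=0}$ coincide as the same nilpotent form explains why only $B_{2}$ carries the restriction $\lambda\neq 0$.

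In each of the three sub-cases, the adapted basis is built from a Jordan chain of $A$ (or from the real and imaginary parts of a complex eigenvector in case \emph{b3)}), together with the dual chain for $-\lambda$ that is automatically pinned down by the pairing; a rescaling then brings the Gram matrix to the stated null form and the matrix of $A$ to the specific entries of $B_{1}$, $B_{2}$, or $B_{3}$. The main obstacle is this last normalization: one must verify that the candidate vectors are $\mathbb{R}$-independent and that the scalings of the two chains can be chosen simultaneously to match exactly the prescribed entries, in particular in case \emph{b1)} where a rotational and a nilpotent contribution must be balanced against each other. These verifications are routine but intricate; the global shape of the classification, however, follows cleanly from the spectral discussion above.
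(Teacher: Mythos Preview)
Your proposal is correct in outline and genuinely different from what the paper does. The paper's own proof is a single sentence: it invokes the existing classification of irreducible skew-symmetric endomorphisms in signature $(2,n-2)$ from \cite{Bou} and \cite[Theorem~4.1]{Leitner}, and says nothing further. You instead give a self-contained argument: the reducible case via iterated splitting into pieces of dimension $\le 3$ (mirroring the explicit proof of Proposition~\ref{Lorentz}), and the irreducible case via the spectral symmetry $\lambda\mapsto -\lambda$, $\lambda\mapsto\bar\lambda$ together with the orthogonality of generalized eigenspaces for eigenvalues not summing to zero.

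What each approach buys: the paper's is short and shifts the burden to the cited references, which is appropriate since those classifications are established. Yours is more informative and independent of the literature, but the final normalizations you flag as ``routine but intricate'' are genuinely where the work lies---in particular, you should be explicit that the nilpotent case (a single Jordan block of size $4$ for eigenvalue $0$) is excluded in signature $(2,2)$ by the parity constraints on nilpotent orbits in $\mathfrak{so}(2,2)$, or else absorbed into $B_1$ with $\nu=0$; and in the semisimple discussion you should also rule out the case of a repeated real eigenvalue $\lambda$ with a $2$-dimensional eigenspace (not just a $1$-dimensional one ``standing apart''), which is reducible because one can pick dual eigenvectors for $\lambda$ and $-\lambda$ spanning a non-degenerate plane. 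With those two clarifications your sketch is complete.
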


\begin{proof}
The possible cases are obtained from the classification of
irreducible skew-symmetric endomorphisms in spaces with signature
$(2,n-2)$ given in \cite{Bou}or \cite[Theorem 4.1]{Leitner}.
\end{proof}

\section{Classification theorem}

\begin{theorem}
\label{Loren}Let $(M,g)$ be a simply connected
naturally reductive Lorentzian manifold
of dimension $4$. Then $M$ is either symmetric,
decomposable or isometric to $G/H$ with

\begin{enumerate}
\item $G=\widetilde{SL(2,\mathbb{R})}\times\mathbb{R}^2$
and $H$ a $1$-dimensional subgroup. If the
Lie algebra is spanned as
$\mathfrak{g}=$\textrm{span}
$\{Y_{1},Y_{2},Y_{3},T_{1},T_{2}\}$,
with non-null brackets
$[Y_{1},Y_{2}]=-\lambda Y_{3}$,
$[Y_{1},Y_{3}]=\lambda Y_{2}$,
$[Y_{2},Y_{3}]=Y_{1}$, then
$\mathfrak{h}=$\textrm{span}$\{A$\thinspace$\}$ and
$\mathfrak{m}=$\textrm{span}$\{X_1,X_2,X_3,X_4\}$
with
\begin{align*}
X_{1}  &
=\tfrac{1}{\lambda}Y_{1}
+\left(
1-\tfrac{1}{\lambda}
\right)  T_{1}
+\tfrac{1}{\lambda}T_{2},
\quad
X_{2}=\tfrac{1}{\lambda}Y_{1}
-\tfrac{1}{\lambda}T_{1}
+\left(
1+\tfrac{1}{\lambda}
\right)
T_{2},\\
X_{3}  &  =Y_{2},
\quad
X_{4}=Y_{3},\quad
A=\tfrac{1}{\lambda}Y_1-\tfrac{1}{\lambda}T_1
+\tfrac{1}{\lambda}T_2.
\end{align*}
The metric $g$ in $M$ is induced by the metric
$\mathrm{diag}(-1,1,1,1)$ in
$\mathfrak{m}$ by $G$ invariance.

\item $G$ belonging to the family of
simply connected Lie groups with Lie
algebra
$\mathfrak{g}=$\textrm{span}$\{X_1,X_2,X_3,X_4,A,B\}$
and structure constants
\[
\begin{array}
[c]{l}
\lbrack A,X_{1}]=-[A,X_{2}]=X_{3},\\
\lbrack B,X_{1}]=-[B,X_{2}]=X_{4},\\
\lbrack A,X_{3}]=[B,X_{4}]=X_{1}+X_{2},\\
\lbrack X_{1},X_{3}]=-[X_{2},X_{3}]=-cX_{4}+\alpha A+\beta B,\\
\lbrack X_{1},X_{4}]=-[X_{2},X_{4}]=cX_{3}+\beta A+\delta B,\\
\lbrack X_{3},X_{4}]=c\left(  X_{1}+X_{2}\right)  ,
\end{array}
\]
with $c$, $\alpha\,$, $\beta$, $\delta\in\mathbb{R}$.
The Lie subalgebra of $H$ is
$\mathfrak{h}=$\textrm{span}$\{A,B$\thinspace$\}$.
The metric $g$ in $M$ is induced by the metric
$\mathrm{diag}(-1,1,1,1)$ in the complement
$\mathfrak{m}=$\textrm{span}$\{X_{1},X_{2},X_{3},X_{4}\}$
by $G$ invariance.
\end{enumerate}
\end{theorem}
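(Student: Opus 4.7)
The plan is to follow the Kowalski--Vanhecke strategy, working at the base point $o$ with the algebraic data $(\mathfrak{m}, \langle\cdot,\cdot\rangle, \tilde{T}, \tilde{R})$, but branching more carefully than in \cite{KowVan1} because of the Lorentzian signature. First I would dispose of the degenerate situations: by Proposition \ref{2p}, if $\tilde T=0$ or $\tilde R=0$ then $M$ is symmetric and we are done; by Proposition \ref{Decomposition}, if the subspace $W=\mathrm{span}\{\tilde T(X,Y)\}\subset\mathfrak{m}$ is proper and non-degenerate then $M$ is decomposable. So, assuming $M$ is neither symmetric nor decomposable, I would be left with two possibilities: either $W=\mathfrak{m}$, or $W$ is a proper totally degenerate subspace of $\mathfrak m$ (the phenomenon absent in the Riemannian setting).

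Next, for any $X\in\mathfrak{m}$ the map $\tilde T_X\colon Y\mapsto\tilde T(X,Y)$ is a skew-symmetric endomorphism of $(\mathfrak{m},\langle\cdot,\cdot\rangle)$, to which I would apply Proposition \ref{Lorentz} to obtain a normal form. In the non-degenerate case b) the nonzero eigenvalues provide a pair of parameters $(\alpha,\beta)$; in the degenerate case a) there is a distinguished null direction. The key observation driving the split into cases 1 and 2 of the theorem is that $W=\mathfrak{m}$ forces $\tilde T_X$ to be generically of type b), while the degenerate $W$ forces $\tilde T_X$ (for some, hence for typical, $X$) to be of type a). I would pick a well-adapted $X$ and use the normal-form basis as a first candidate basis of $\mathfrak{m}$.

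Having fixed such a basis, I would exploit the full tensorial structure. The naturally reductive condition (\ref{NaturallyReductiveCondition}) gives the skew-adjointness of each $\tilde T_X$, the algebraic Bianchi identities (\ref{Bianchi1})--(\ref{Bianchi2}) relate $\tilde R$ to $\tilde T$, and the parallelism $\tilde\nabla\tilde T=\tilde\nabla\tilde R=0$ translates (after choosing generators of $\mathfrak{k}\subset\mathfrak{h}$ as the image of $\tilde R$) into the derivation property $A\cdot\tilde T=A\cdot\tilde R=0$ for all $A\in\mathfrak{k}$. These two families of constraints, applied component by component in the normal-form basis, pin down $\tilde R$ up to a small number of free real parameters: in the non-degenerate case I expect to recover, after a suitable change of basis, the standard $\widetilde{SL(2,\mathbb R)}$ structure with an $\mathbb R^2$ factor acting by inner automorphisms (yielding case 1), and in the degenerate case the four free parameters $c,\alpha,\beta,\delta$ of case 2.

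Finally, with $\tilde T$ and $\tilde R$ determined, I would reconstruct the Lie algebra $\mathfrak{g}=\mathfrak{m}\oplus\mathfrak{h}$ via the bracket formulas (\ref{bracket}) and verify Jacobi; passing to the simply connected $G$ and quotienting by the isotropy $H$ yields the model listed in the theorem. I anticipate the main obstacle being the organization of the case analysis in the degenerate scenario: with $W$ null, no orthonormal diagonalization is available, and one must parametrise $\tilde R$ by hand, then use the derivation equations $A\cdot\tilde T=0$ together with $A\cdot\tilde R=0$ for every element $A$ in the holonomy algebra to whittle down the parameters. Checking that the resulting family (case 2) is genuinely new --- not disguised as a symmetric or decomposable space for some choice of $(c,\alpha,\beta,\delta)$ --- is precisely the content the authors defer to the later indecomposability analysis, and I would invoke that separately rather than attempt it inside this proof.
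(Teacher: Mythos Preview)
Your overall outline---dispose of $\tilde T=0$ and $\tilde R=0$ via Proposition~\ref{2p}, invoke Proposition~\ref{Decomposition} for non-degenerate $W$, then pin down $\tilde R$ using Bianchi and the derivation identities, and finally rebuild $\mathfrak g$ via \eqref{bracket}---matches the paper. But the mechanism you propose for splitting into cases~1 and~2 does not work. In dimension~$4$ the tensor $\tilde T_{XYZ}=\langle\tilde T(X,Y),Z\rangle$ is a $3$-form, hence Hodge-dual to a single vector $V\in\mathfrak m$; from \eqref{TildaLorentz} one finds $V=-dX_1+cX_2-bX_3+aX_4$ and $W=\mathrm{span}\{\tilde T(X,Y)\}=V^{\perp}$ whenever $\tilde T\neq 0$. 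So $W$ is \emph{always} a proper $3$-dimensional subspace and never equals $\mathfrak m$; your dichotomy ``$W=\mathfrak m$ versus $W$ degenerate'' is vacuous on one side. (Also, $W$ cannot be \emph{totally} isotropic in Lorentz signature---that would force $\dim W\le 1$---so ``totally degenerate'' should read ``degenerate''.) Worse, both indecomposable families in the theorem have $V$ null: in the situation leading to item~2 one has $(a,b,c,d)=(0,0,c,-c)$ with $V=c(X_1+X_2)$, and in the situation leading to item~1 one has $(a,b,c,d)=(0,0,c,\pm c)$ with $V=\mp cX_1+cX_2$; in each case $\langle V,V\rangle=0$. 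Thus the nature of $W$ does not separate cases~1 and~2 at all.

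What actually distinguishes them in the paper is the normal form of a \emph{curvature} operator $A=\tilde R(X,Y)$, not of $\tilde T_X$. One fixes a nonzero $A=\tilde R(X,Y)$, puts it into one of the forms of Proposition~\ref{Lorentz}, and then the derivation identity $A\cdot\tilde T=0$ immediately constrains $(a,b,c,d)$: the nilpotent form~(a) forces $b=0$, $d=-c$ and (after discarding decomposable subcases) yields the family in item~2, while the semisimple form~(b) with $\alpha=0$ forces $a=b=0$ and leads to item~1. Normalising $\tilde T_X$ instead is much less effective, because $\tilde T_X X=0$ makes $\tilde T_X$ automatically reducible for every $X$, and its type does not discriminate between the two families (one can check that $\tilde T_{X_3}$ is nilpotent of type~(a) in \emph{both}). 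If you want to retain the $W$-viewpoint, the only honest first reduction is ``$V$ non-null $\Rightarrow$ decomposable; $V$ null $\Rightarrow$ continue'', after which you must still branch on the normal form of $\tilde R(X,Y)$ exactly as the paper does.
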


\begin{theorem}
\label{dosdos}Let $(M,g)$ be a simply connected
naturally reductive $(2,2)$-signature pseudo-Riemannian
manifold of dimension $4$. Then $M$ is either symmetric,
decomposable or isometric to $G/H$ with

\begin{enumerate}
\item $G=\widetilde{SL(2,\mathbb{R})}\times\mathbb{R}^{2}$
and $H$ a $1$-dimensional subgroup. If the Lie algebra
is spanned as
$\mathfrak{g}=$\textrm{span}$\{Y_1,Y_2,Y_3,T_1,T_2\}$,
with non-null brackets
$[Y_1,Y_2]=\lambda Y_3$,
$[Y_1,Y_3]=\lambda Y_2$,
$[Y_2,Y_3]=Y_1$, then
$\mathfrak{h}=$\textrm{span}$\{A$\thinspace$\}$ and
$\mathfrak{m}=$\textrm{span}$\{X_1,X_2,X_3,X_4\}$
with
\begin{align*}
X_{1}  &  =-\tfrac{1}{\lambda}Y_{1}
+
\left(
1+\tfrac{1}{\lambda}
\right)
T_{1}+\tfrac{1}{\lambda}T_{2},
\quad
X_{2}=Y_{2},
\quad
X_{4}=Y_{3},\\
X_{3}  &  =\tfrac{1}{\lambda}Y_{1}
-\tfrac{1}{\lambda}T_{1} +
\left(
1-\tfrac{1}{\lambda}
\right)  T_{2},
\quad
A=\tfrac{1}{\lambda}Y_{1}
-\tfrac{1}{\lambda}T_{1}
-\tfrac{1}{\lambda}T_{2}.
\end{align*}
The metric $g$ in $M$ is induced by the metric
$\mathrm{diag}(-1,-1,1,1)$ in
$\mathfrak{m}$ by $G$ invariance.$\allowbreak$

\item $G$ belonging to the family of simply
connected Lie groups with Lie algebra
$\mathfrak{g}=$\textrm{span}$\{X_1,X_2,X_3,X_4,A,B\}$
and structure constants
\[
\begin{array}
[c]{l}
\lbrack A,X_{2}]=-[A,X_{3}]=X_{4},\\
\lbrack B,X_{1}]=[A,X_{4}]=X_{2}+X_{3},\\
\lbrack B,X_{2}]=-[B,X_{3}]=-X_{1},\\
\lbrack X_{1},X_{2}]=-[X_{1},X_{3}]=-cX_{4}-\beta A-\delta B,\\
\lbrack X_{2},X_{4}]=-[X_{3},X_{4}]=cX_{1}-\alpha A+\beta B,\\
\lbrack X_{1},X_{4}]=-c\left(  X_{2}+X_{3}\right)  ,
\end{array}
\]
with $c$, $\alpha\,$, $\beta$, $\delta\in\mathbb{R}$.
The Lie subalgebra of $H$ is
$\mathfrak{h}=$\textrm{span}$\{A,B$\thinspace$\}$.
The metric $g$ in $M$ is induced by the metric
$\mathrm{diag}(-1,-1,1,1)$ in the complement
$\mathfrak{m}=$\textrm{span}$\{X_1,X_2,X_3,X_4\}$
by $G$ invariance.
\end{enumerate}
\end{theorem}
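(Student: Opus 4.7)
The plan is to follow the same scheme used for Theorem \ref{Loren}, adapting the case-by-case analysis from the Lorentzian normal forms of Proposition \ref{Lorentz} to the neutral-signature normal forms of Proposition \ref{Neutral}. I would assume $(M,g)$ is simply connected, neither symmetric nor decomposable. By Proposition \ref{2p} the canonical curvature $\tilde{R}$ at $o$ does not vanish, and by Proposition \ref{Decomposition} the subspace $W=\mathrm{span}\{\tilde{T}(X,Y)\mid X,Y\in T_oM\}$ cannot be a proper non-degenerate subspace of $T_oM$. Under these two negative hypotheses the goal is to exhaust the Lie algebras $\mathfrak{g}=\mathfrak{m}\oplus\mathfrak{h}$ compatible with the natural reductivity data $(\tilde{T},\tilde{R})$ and show they coincide with the two families in the statement.

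First I would pick a non-zero $X\in\mathfrak{m}$ and consider the skew-symmetric endomorphism $\tilde{T}_X=\tilde{T}(X,\cdot)\colon\mathfrak{m}\to\mathfrak{m}$, whose skew-symmetry follows from (\ref{NaturallyReductiveCondition}). Applying Proposition \ref{Neutral} I would put $\tilde{T}_X$ into one of the six normal forms a1), a2), a3), b1), b2), b3). In each case, using the total antisymmetry of $\langle\tilde{T}(X,Y),Z\rangle$, the whole tensor $\tilde{T}$ on the adapted basis is reduced to a linear ansatz with a handful of free coefficients. The first Bianchi identity (\ref{Bianchi1}) then fixes $\tilde{R}$ in terms of those coefficients, and the invariance relations $A\cdot\tilde{T}=A\cdot\tilde{R}=0$ for every $A$ in the holonomy subalgebra $\mathfrak{k}=\mathrm{span}\{\tilde{R}(X,Y)\mid X,Y\in\mathfrak{m}\}$, combined with (\ref{Bianchi2}), provide a system of algebraic constraints. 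The trichotomy is then: the constraints force $\tilde{R}=0$ (ruled out by Proposition \ref{2p}), force $W$ to be proper and non-degenerate (ruled out by Proposition \ref{Decomposition}), or cut the parameters down to a definite configuration to be retained.

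Each retained configuration must be identified with one of the two families in the statement. For item 1, the surviving data should match a $\widetilde{\mathrm{SL}(2,\mathbb{R})}$ factor coupled with an abelian $\mathbb{R}^2$ along a one-dimensional isotropy that twists the two pieces, producing an indecomposable space of signature $(2,2)$; the specific change of basis to $X_1,X_2,X_3,X_4,A$ is tailored so that the transferred metric becomes $\mathrm{diag}(-1,-1,1,1)$ on $\mathfrak{m}$ and the $\mathfrak{m}$-part of the bracket reproduces the torsion predicted by the corresponding reducible normal form. For item 2, the four parameters $c,\alpha,\beta,\delta$ appear naturally as the residual coefficients left in $\tilde{T}$ and $\tilde{R}$ after enforcing all Bianchi and invariance relations. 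Finally, for each family I would rebuild $\mathfrak{g}$ via (\ref{bracket}), verify the Jacobi identity, and check that $\tilde{R}\neq 0$ and that $W$ is either $T_oM$ or degenerate, so no further de Rham--Wu decomposition is possible.

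The hard part will be the branching into the six normal forms of Proposition \ref{Neutral}, in particular the three irreducible types b1), b2), b3), which have no Riemannian analogue and only a faint counterpart (case a) in Proposition \ref{Lorentz}. For these, the invariance $A\cdot\tilde{T}=0$ has to be imposed against the full holonomy algebra $\mathfrak{k}$, and the intricate coupling between the parameters $\lambda,\xi,\nu$ of the normal form and the free torsion coefficients can collapse the example to a symmetric or decomposable space in unexpected subcases. A careful bookkeeping of all these subcases, systematically discarded via Proposition \ref{2p} or Proposition \ref{Decomposition}, is what ultimately isolates the two families appearing in the statement.
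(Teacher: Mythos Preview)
Your overall strategy---assume $\tilde R\neq 0$ and $\tilde T\neq 0$, exhaust the possibilities via the normal forms of Proposition \ref{Neutral}, discard the symmetric and decomposable cases via Propositions \ref{2p} and \ref{Decomposition}, and rebuild $\mathfrak g$ from the surviving data using (\ref{bracket})---is exactly the scheme the paper follows. The difference is in \emph{which} skew-symmetric endomorphism you feed into Proposition \ref{Neutral}: you propose to normalize a torsion operator $\tilde T_X=\tilde T(X,\cdot)$, whereas the paper normalizes a curvature operator $A=\tilde R(X,Y)$.

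This is not a gap, but it changes the bookkeeping substantially. In the paper's organisation one first writes $\tilde T$ in the general form (\ref{TtildaNeutral_orthonormal}) (or (\ref{TtildaNeutral2}) for the second metric model) with four free parameters $a,b,c,d$, then puts some nonzero $\tilde R(X,Y)$ into one of the six normal forms and imposes $A\cdot\tilde T=0$. Because $A$ is explicit, this yields clean linear relations on $a,b,c,d$ in each branch, after which the Bianchi identities and $A\cdot\tilde R=0$ finish the job. Your route instead fixes a basis adapted to $\tilde T_X$; note that $\tilde T_X X=0$, so $\tilde T_X$ is never invertible and the irreducible types b2), b3) (and b1) with $\nu\neq 0$) are excluded from the outset---a simplification you do not mention. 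On the other hand, normalizing $\tilde T_X$ only sees three of the four torsion parameters (the component dual to $X$ drops out), and in that adapted basis $\tilde R(Y,Z)$ is no longer in normal form, so the invariance constraints $\tilde R(Y,Z)\cdot\tilde T=0$ become messier. Both routes should arrive at the same two families, but the paper's choice of normalizing the curvature keeps the linear algebra shorter.
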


\begin{remark}
\label{remi}The families of algebras in Theorem \ref{Loren}-2 and
Theorem \ref{dosdos}-2 include some particular cases where $M=G/H$
is symmetric. That happens when $c=0$ and in the cases studied in
Proposition \ref{Case 1} and \ref{Case 2}. In addition, when
$\beta=\delta=0$, the quotient $M=G/H$ is the same as
$G^{\prime}/H^{\prime}$, where the Lie algebras are
$\mathfrak{g}^{\prime}=$\textrm{span}$\{X_1,X_2,X_3,X_4,A\}$ and
$\mathfrak{h}^{\prime}=$\textrm{span}$\{A$\thinspace$\}$. In any
case, in general, one can check that the structure constants of
these families define a solvable $6$-dimensional Lie algebra with
$5$-dimensional non-Abelian nilradical. It thus belongs to the
list of all possible algebras with these properties appearing in
\cite[Table 13]{Campoamor}, \cite[Table 3]{FSH}. Some computations
show that the dependence of $\mathfrak{g}$ on the parameters gives
different cases of the aforementioned list.
\end{remark}

\section{Proof of Theorem \ref{Loren}}

Let $\left(  X_1,X_2,X_3,X_4\right)  $
be an orthonormal basis in $T_oM$ such that
$\langle X_{i},X_{j}\rangle=\varepsilon_{i}\delta_{ij}$,
with $\varepsilon_{1}=-1$, $\varepsilon_{i}=1$
for $i=2,3,4$. We write:
\begin{equation}
\tilde{T}(X_{i},X_{j})
=\tilde{T}_{ij}^{k}X_{k},
\quad i,j,k=1,\ldots,4.
\label{T_tilda}
\end{equation}
From (\ref{NaturallyReductiveCondition}),
we have:
$\varepsilon_{k}\tilde{T}_{ij}^{k}
+\varepsilon_{j}\tilde{T}_{ik}^{j}=0$,
$i,j,k=1,\ldots,4$. Hence
$\tilde{T}_{ij}^{i}=0$ for $1\leq i\leq j\leq4$,
and by denoting
$\tilde{T}_{12}^{3}=a$,
$\tilde{T}_{12}^{4}=b$,
$\tilde{T}_{13}^{4}=c$,
$\tilde{T}_{23}^{4}=d$, we have:
\begin{equation}
\left\{
\begin{array}
[c]{l}
\tilde{T}(X_{1},X_{2})=aX_{3}+bX_{4},\\
\tilde{T}(X_{1},X_{3})=-aX_{2}+cX_{4},\\
\tilde{T}(X_{1},X_{4})=-bX_{2}-cX_{3},\\
\tilde{T}(X_{2},X_{3})=-aX_{1}+dX_{4},\\
\tilde{T}(X_{2},X_{4})=-bX_{1}-dX_{3},\\
\tilde{T}(X_{3},X_{4})=-cX_{1}+dX_{2}.
\end{array}
\right.
\label{TildaLorentz}
\end{equation}
We consider the skew-symmetric operator
$A=\tilde{R}(X,Y)$ for a choice of
$X,Y\in T_{o}M$. If $A=0$ for all choices
of $X,Y$, then $\mathfrak{h=}\{0\}$
and $M$ is symmetric (see Proposition \ref{2p}).
We thus assume $\tilde{T}\neq0$ and there exist
$X,Y\in T_{o}M$ such that $A=\tilde{R}(X,Y)\neq0$.
We use the classification of
Proposition \ref{Lorentz}.

\subsection{Case a)\label{Case a}}

Suppose that a curvature transformation
$A=\tilde{R}(X,Y)$ exists such that
\begin{equation}
AX_{1}=X_{3},
\quad
AX_{2}=-X_{3},
\quad
AX_{3}=X_{1}+X_{2},
\quad
AX_{4}=0,
\label{A_Lorentz_case_a}
\end{equation}
as in Proposition \ref{Lorentz}-(a) (for the opposite sign, just
consider $\tilde{R}(Y,X)$). By applying $A\cdot\tilde{T}=0$ to
$X_{i},X_{j}$ for $1\leq i<j\leq4$, and taking
(\ref{TildaLorentz}) and (\ref{A_Lorentz_case_a}) into account we
easily get: $b=0$ and $c+d=0$.

In the case, $a\neq0$, $c=0$ (resp. $a\cdot c\neq0$)
if we take
$W=\mathrm{span}\{X_1,X_2,X_3\}$
(resp. $W=\mathrm{span}\{X_3,-aX_2+cX_4,X_1+X_2\}$)
we conclude that $M$ is decomposable by virtue
of Proposition \ref{Decomposition}. We thus consider
$a=0$, $c\neq0$. From the Bianchi identities
(\ref{Bianchi1}), (\ref{Bianchi2}) and imposing
$A\cdot\widetilde{R}=0$, we obtain:
\[
\left\{
\begin{array}
[c]{l}
\tilde{R}(X_{1},X_{2})
=\tilde{R}(X_{3},X_{4})=0,\\
\tilde{R}(X_{1},X_{3})
=\tilde{R}_{133}^{2}A+\tilde{R}_{143}^{2}B,\\
\tilde{R}(X_{1},X_{4})
=\tilde{R}_{143}^{2}A+\tilde{R}_{144}^{2}B,\\
\tilde{R}(X_{2},X_{j})
=-\tilde{R}(X_{1},X_{j}),
\quad
j=3,4,
\end{array}
\right.
\]
where
\[
B=\left(
\begin{array}
[c]{cccc}
0 & 0 & 0 & 1\\
0 & 0 & 0 & 1\\
0 & 0 & 0 & 0\\
1 & -1 & 0 & 0
\end{array}
\right)  .
\]
We have the following possibilities:

\begin{itemize}
\item If
$(\tilde{R}_{143}^{2})^{2}\neq\tilde{R}_{133}^{2}\tilde{R}_{144}^{2}$
then
$\mathfrak{h} =\mathrm{span}\{\tilde{R}(X,Y)|X,Y\in\mathfrak{m}\}
=\mathrm{span}\{A,B\}$.
By using (\ref{bracket}) we write down the
non-vanishing brackets for the Lie algebra
$\mathfrak{g}=\mathfrak{m}+\mathfrak{h}$:
\[
\begin{array}
[c]{l}
\lbrack A,X_{1}]=-[A,X_{2}]=X_{3},\\
\lbrack B,X_{1}]=-[B,X_{2}]=X_{4},\\
\lbrack A,X_{3}]=[B,X_{4}]=X_{1}+X_{2},\\
\lbrack X_{1},X_{3}]=-[X_{2},X_{3}]
=-cX_{4}+\tilde{R}_{133}^{2}A
+\tilde{R}_{143}^{2}B,\\
\lbrack X_{1},X_{4}]=-[X_{2},X_{4}]
=cX_{3}+\tilde{R}_{143}^{2}A
+\tilde{R}_{144}^{2}B,\\
\lbrack X_{3},X_{4}]
=c\left(  X_{1}+X_{2}\right)  .
\end{array}
\]

\item If
$(\tilde{R}_{143}^{2})^{2}
=\tilde{R}_{133}^{2}\tilde{R}_{144}^{2}$
then the dimension of
$\mathfrak{h}
=
\mathrm{span}\{\tilde{R}(X,Y)|X,Y\in \mathfrak{m}\}$ is one.
As we are supposing that there exists $X,Y$ such that
$\tilde{R}(X,Y)=A$, we have:
$\tilde{R}_{144}^{2}=\tilde{R}_{143}^{2}=0$.
By using (\ref{bracket}) we write down the
non-vanishing brackets for the Lie
algebra $\mathfrak{g}=\mathfrak{m}+\mathfrak{h}$:
\[
\begin{array}
[c]{l}
\lbrack A,X_{1}]=-[A,X_{2}]=X_{3},\\
\lbrack A,X_{3}]=X_{1}+X_{2},\\
\lbrack X_{1},X_{3}]=-[X_{2},X_{3}]
=-cX_{4}+\tilde{R}_{133}^{2}A,\\
\lbrack X_{1},X_{4}]=-[X_{2},X_{4}]
=cX_{3},\\
\lbrack X_{3},X_{4}]
=c\left(  X_{1}+X_{2}\right)  .
\end{array}
\]
This corresponds to the case $\beta=\delta=0$
of the family in Theorem \ref{Loren}-2.
\end{itemize}

\subsection{Case b) \label{Case b}}

Suppose that a curvature transformation
$A=\tilde{R}(X,Y)$ exists such that
\begin{equation}
A=\alpha A_{2}+\beta A_{3},
\label{A_Lorentz_case_b}
\end{equation}
as in Proposition \ref{Lorentz}.
If $\alpha\beta\neq0$, by applying
$A\cdot\tilde{T}=0$ to $X_i,X_j$
for $1\leq i<j\leq4$, and taking
(\ref{TildaLorentz}) and (\ref{A_Lorentz_case_b})
into account we easily
obtain $\tilde{T}=0$ and hence $M$ is symmetric.
We now assume that
$\alpha\beta=0$ and $A\neq0$.

In the case $\beta=0$, $\alpha\neq0$,
by applying $A_{2}\cdot\tilde{T}=0$ to
$X_i,X_j$ for $1\leq i<j\leq4$, and
taking (\ref{TildaLorentz}) into
account we have $c=d=0$. Then, if $a\neq0$
(resp. $a=0$, $b\neq0$) we take
$W=\mathrm{span}\{X_1,X_2,X_3+\tfrac{b}{a}X_4\}$
(resp.
$W=\mathrm{span}\{X_1,X_2,X_4\}$)
and we conclude that $M$ is
decomposable by virtue of Proposition \ref{Decomposition}.

We now assume that $\alpha=0$, $\beta\neq0$.
By applying $A_{3}\cdot\tilde{T}=0$ to
$X_i,X_j$ for $1\leq i<j\leq4$,
and taking (\ref{TildaLorentz})
into account we get $a=b=0$.
If $c^{2}-d^{2}\neq0$ we take
$W=\mathrm{span}\{X_{3},X_{4},-cX_{1}+dX_{2}\}$
and we conclude that $M$ is decomposable by
virtue of Proposition \ref{Decomposition}.

We thus consider $d=\eta c\neq0$ with $\eta=\pm1$.
In this case, from
straightforward--but rather long--computations,
we get
\[
\left\{
\begin{array}
[c]{l}
\tilde{R}(X_{1},X_{3})
=\tilde{R}_{133}^{1}M+\tilde{R}_{143}^{1}N
+\tilde{R}_{134}^{3}A_{3},\\
\tilde{R}(X_{1},X_{4})
=\tilde{R}_{143}^{1}M+\tilde{R}_{144}^{1}N
+\tilde{R}_{144}^{3}A_{3},\\
\tilde{R}(X_{3},X_{4})
=-\tilde{R}_{134}^{3}M-\tilde{R}_{144}^{3}N
+\tilde{R}_{344}^{3}A_{3},\\
\tilde{R}(X_1,X_2)=0,\\
\tilde{R}(X_2,X_i)=\eta\tilde{R}(X_1,X_i),
\text{ }i=3,4,
\end{array}
\right.
\]
where
\begin{equation}
M=\left(
\begin{array}
[c]{cccc}
0 & 0 & 1 & 0\\
0 & 0 & -\eta & 0\\
1 & \eta & 0 & 0\\
0 & 0 & 0 & 0
\end{array}
\right)  \text{ and }N=\left(
\begin{array}
[c]{cccc}
0 & 0 & 0 & 1\\
0 & 0 & 0 & -\eta\\
0 & 0 & 0 & 0\\
1 & \eta & 0 & 0
\end{array}
\right)  .
\label{MN}
\end{equation}
By applying $A_3\cdot\tilde{R}=0$ to
$\left(  X_1,X_3,X_3\right)  $ we
obtain $\tilde{R}_{143}^1=\tilde{R}_{144}^3=0$.
With the choice $\left( X_1,X_3,X_4\right)  $,
we get
$\tilde{R}_{144}^{1}=\tilde{R}_{133}^{1}$,
and with choice $\left(  X_1,X_4,X_4\right) $
we get $\tilde{R}_{134}^{3}=0$.
One can verify that these four relations
($\tilde{R}_{143}^{1}=\tilde{R}_{144}^{3}=\tilde{R}_{134}^{3}=0$,
$\tilde{R}_{144}^{1}=\tilde{R}_{133}^{1}$)
are equivalent to $A_{3}\cdot\tilde{R}=0$. If
$\tilde{R}_{133}^{1}\neq0$, then
$\tilde{R}(X_{1},X_{3})$ is a skew-symmetric
endomorphism like in Proposition \ref{Lorentz} -(a)
that is already studied in
\S \ref{Case a}. Therefore, with
$\tilde{R}_{133}^{1}=0$ and
$\tilde{R}_{344}^{3}\neq0$, we have
$\mathfrak{g}=\mathfrak{m}+\mathfrak{h}$, with
$\mathfrak{h}=\mathrm{span}\{A_{3}\}$.
By using (\ref{bracket}) we obtain that
the non-vanishing brackets are
\[
\begin{array}
[c]{ll}
\lbrack A_3,X_3]=-X_4, & [A_3,X_4]=X_3,\\
\lbrack X_1,X_3]=-cX_4, & [X_1,X_4]=cX_3,\\
\lbrack X_2,X_3]=-dX_4, & [X_2,X_4]=dX_3,\\
\lbrack X_3,X_4]=cX_1-dX_2+\tilde{R}_{344}^{3}A_3. &
\end{array}
\]
Letting
\[
T_1=X_1-cA_3,
\quad
T_2=X_2-dA_3,
\quad
Y_1=cT_1-dT_2+\lambda A_3
=cX_1-dX_2+\lambda A_3,
\]
with $\lambda=\tilde{R}_{344}^{3}$ then,
since $\lambda\neq0$ and $d=\eta c\neq0$,
a basis of the same Lie algebra is given by
$\{Y_1,X_3,X_4,T_1,T_2\}$ and the only
non-null brackets are
\[
[X_3,Y_1]=\lambda X_4,
\quad
[Y_1,X_4]=\lambda X_3,
\quad
[X_3,X_4]=Y_1,
\]
that is,
$\mathrm{span}\{Y_1,X_3,X_4\}\simeq\mathfrak{sl}(2,\mathbb{R})$,
and the Lie algebra $\mathfrak{g}$ is the direct
sum of the Abelian Lie algebra
$\mathrm{span}\{T_{1},T_{2}\}\simeq\mathbb{R}^{2}$ and
$\mathfrak{sl}(2,\mathbb{R})$. The corresponding
simply connected Lie group is thus the
direct product
$\widetilde{SL(2,\mathbb{R})}\times\mathbb{R}^{2}$.

\section{Example: oscillator}

One of the most celebrated examples
of Lorentzian naturally reductive spaces
is the oscillator group. We refer to \cite{Gadea1}
for notation and definitions. This group
is defined as
$G=\mathbb{R}\times\mathbb{C}\times\mathbb{R}$
with group structure
\[
(p_1,z_1,q_1)\cdot(p_2,z_2, q_2)=(p_1+p_2
+\tfrac{1}{2}\operatorname{Im}(\bar{z}_{1}e^{iq_{1}}z_{2}),
z_1+e^{iq_1}z_2,q_1+q_2).
\]
The corresponding Lie $\mathfrak{g}$ with basis
$\mathcal{B}=(P,X,Y,Q)$ has non-vanishing brackets
\[
[X,Y]=P,\quad
[Q,X]=Y,\quad
[Q,Y]=-X.
\]
Furthermore, $G$ in endowed
with the left invariant metric
\[
\left(
\begin{array}
[c]{cccc}
\varepsilon & 0 & 0 & 1\\
0 & 1 & 0 & 0\\
0 & 0 & 1 & 0\\
1 & 0 & 0 & \varepsilon
\end{array}
\right)
\]
with respect to the basis $\mathcal{B}$ and for
$-1<\varepsilon<1$. Note that $g$ is not, a priori, a product of
metrics. The manifold is symmetric if and only if $\varepsilon=0$.
For $\varepsilon\neq0$, the naturally reductive structure tensors
$D$ and the curvature operators $\tilde{R}$ are given in
\cite{Gadea1}. With respect to the orthonormal basis
$(P^{\prime},X,Y,Q^{\prime})$,
$P^{\prime}=(2-2\varepsilon)^{-1/2}(P-Q)$,
$Q^{\prime}=(2+2\varepsilon)^{-1/2}(P+Q)$, we have
$\tilde{R}_{XY}P^{\prime}=\tilde{R}_{XY}Q^{\prime}=0$,
$\tilde{R}_{XY}X=-\varepsilon Y$, $\tilde{R}_{XY}Y=3\varepsilon
X$, and $\tilde{T}(X,Y)
=-\frac{1}{2}\sqrt{2-2\varepsilon}P^{\prime}
-\frac{1}{2}\sqrt{2+2\varepsilon}Q^{\prime}\,\ $ which is as in \S
\ref{Case b} with $c^2-d^2\neq0$. Hence $G$ must be the
semi-Riemannian product of two naturally reductive spaces with
infinitesimal decomposition $T_{e}G=W\oplus W^{\perp}$,
$W=\mathrm{span}\{X,Y,\tilde{T}(X,Y)\}$. From this, we easily get
the splitting $G=M_1\times M_2$, with
\begin{align*}
M_1  &
=\{(\lambda,0,0,-\varepsilon\lambda)~|~
\lambda\in
\mathbb{R}\}\simeq\mathbb{R},\\
M_2  &
=\{(p,x,y,0)~|~p,x,y\in\mathbb{R\}\simeq R}^{3}.
\end{align*}
Using coordinates $(\lambda;p,x,y)$ in
$M_1\times M_2$, one can check that
the matrix of $g$ in this system reads
\[
\left(
\begin{array}
[c]{cccc}
\varepsilon(\varepsilon-1)(\varepsilon+1) & 0 & 0 & 0
\\ 0 & \varepsilon & \tfrac{1}{2}\varepsilon y &
-\tfrac{1}{2}\varepsilon x
\\ 0 & \tfrac{1}{2}\varepsilon y & 1
+\tfrac{1}{4}\varepsilon y^{2} &
-\tfrac{1}{2}\varepsilon xy
\\ 0 & -\tfrac{1}{2}\varepsilon x &
-\tfrac{1}{2}\varepsilon xy & 1
+\tfrac{1}{4}\varepsilon x^{2}
\end{array}
\right)
\]
which proves that
$(G,g_{\varepsilon})=(M_{1},g_{1})\times(M_{2},g_{2})$,
with
$g_1$ Riemannian and $g_2$ Lorentzian for $-1<\varepsilon<0$
and the opposite for $0<\varepsilon<1$.

\section{Proof of Theorem \ref{dosdos}}

\subsection{Reducible cases}

Let $\left(  X_1,X_2,X_3,X_4\right) $ be a basis in $T_oM$ such
that $\langle X_i,X_j\rangle =\varepsilon_i\delta_{ij}$, with
$\varepsilon_{1}=\varepsilon_{2}=-1$,
$\varepsilon_{3}=\varepsilon_{4}=1$. From
(\ref{NaturallyReductiveCondition}) we have:
\[
0=\tilde{T}_{ij}^k\varepsilon_k
+\tilde{T}_{ik}^j\varepsilon_{j},
\quad i,j,k=1,\ldots,4,
\]
where $\tilde{T}_{ij}^{k}$ is introduced
in (\ref{T_tilda}). Therefore,
$\tilde{T}_{ij}^{i}=0$ and
$\tilde{T}_{ji}^{i}=0$,
$1\leq i\leq j\leq4$, and
by denoting
$\tilde{T}_{12}^{3}=a$,
$\tilde{T}_{12}^{4}=b$,
$\tilde{T}_{13}^{4}=c$,
$\tilde{T}_{23}^{4}=d$, we get:
\begin{equation}
\left\{
\begin{array}
[c]{l}
\tilde{T}(X_1,X_2)=aX_3+bX_4,\\
\tilde{T}(X_1,X_3)=aX_2+cX_4,\\
\tilde{T}(X_1,X_4)=bX_2-cX_3,\\
\tilde{T}(X_2,X_3)=-aX_1+dX_4,\\
\tilde{T}(X_2,X_4)=-bX_1-dX_3,\\
\tilde{T}(X_3,X_4)=-cX_1-dX_2.
\end{array}
\right.
\label{TtildaNeutral_orthonormal}
\end{equation}
As in \S \ref{Loren}, we assume
$\tilde{T}\neq0$ and there exist
$X,Y\in T_{o}M$ such that
$A=\tilde{R}(X,Y)\neq0$ so that we can apply the
classification of Proposition \ref{Neutral}.

\subsubsection{Case a1)\label{Case a 1}}

Suppose that a curvature transformation
$A=\tilde{R}(X,Y)$ exists such that
\begin{equation}
AX_{1}=0,
\quad AX_{2}=X_{4},
\quad AX_{3}=-X_{4},
\quad AX_{4}=X_{2}+X_{3}.
\label{A_Neutral_case_a1}
\end{equation}
By applying $A\cdot\tilde{T}=0$ to
$X_i,X_j$ for $1\leq i<j\leq4$, and
taking (\ref{TtildaNeutral_orthonormal})
and (\ref{A_Neutral_case_a1}) into
account we get $a=0$, $c=-b$.
As we are considering $\tilde{T}\neq0$ we have
$b^2+d^2\neq0$.

In the case, $b=0$, $d\neq0$ (resp. $b\cdot d\neq0$)
if we take $W=\mathrm{span}\{X_2,X_3,X_4\}$
(resp. $W=\mathrm{span}\{bX_1+dX_3,-bX_1+dX_2,X_4\}$)
we conclude that $M$ is decomposable by
virtue of Proposition \ref{Decomposition}.

For the case $b\neq0$, $d=0$,
(\ref{Bianchi1}), (\ref{Bianchi2}) we have
\[
\left\{
\begin{array}
[c]{l}
\tilde{R}(X_1,X_4)=\tilde{R}(X_2,X_3)=0,\\
\tilde{R}(X_1,X_3)=-\tilde{R}(X_1,X_2),\\
\tilde{R}(X_2,X_4)=-\tilde{R}(X_3,X_4),\\
\tilde{R}(X_1,X_2)=-\tilde{R}_{134}^{3}A_1-\tilde{R}_{133}^{1}B,\\
\tilde{R}(X_3,X_4)=\tilde{R}_{344}^{3}A_1-\tilde{R}_{134}^{3}B,
\end{array}
\right.
\]
with $A_1$ as in Proposition \ref{Neutral} and
\[
B=\left(
\begin{array}
[c]{cccc}
0 & -1 & 1 & 0\\
1 & 0 & 0 & 0\\
1 & 0 & 0 & 0\\
0 & 0 & 0 & 0
\end{array}
\right)  .
\]
We have two possibilities:

\begin{itemize}
\item If
$(\tilde{R}_{134}^{3})^{2}\neq-\tilde{R}_{133}^{1}\tilde{R}_{344}^{3}$
then
$\mathfrak{h}=\mathrm{span}\{A_{1},B\}$ and by using
(\ref{bracket}) the non-vanishing brackets of the Lie algebra
$\mathfrak{g}=\mathfrak{m}+\mathfrak{h}$ are
\[
\begin{array}
[c]{l}
\lbrack A_1,X_2]=-[A_1,X_3]=X_4,\\
\lbrack B,X_1]=[A_1,X_4]=X_2+X_3,\\
\lbrack B,X_2]=-[B,X_3]=-X_1,\\
\lbrack X_1,X_2]=-[X_1,X_3] =-bX_4-\tilde{R}_{134}^3A_1
-\tilde{R}_{133}^1B,\\
\lbrack X_2,X_4]=-[X_3,X_4] =bX_1-\tilde{R}_{344}^3A_1
+\tilde{R}_{134}^3B,\\
\lbrack X_1,X_4]=-b\left( X_2+X_3\right) .
\end{array}
\]

\item If
$(\tilde{R}_{134}^{3})^{2}
=-\tilde{R}_{133}^{1}\tilde{R}_{344}^{3}$,
the dimension of
$\mathfrak{h}
=\mathrm{span}\{\tilde{R}(X,Y)|X,Y\in \mathfrak{m}\}$
is one. As we supposed that
there exists $X,Y$ such that
$\tilde{R}(X,Y)=A_{1}$, with
$A_{1}$ as in (\ref{A_Neutral_case_a1}),
we have that
$\tilde{R}_{133}^{1}=\tilde{R}_{134}^{3}=0$.
The non-vanishing brackets of the Lie algebra
$\mathfrak{g}=\mathfrak{m}+\mathfrak{h}$ are
\[
\begin{array}
[c]{l}
\lbrack A_1,X_2]=-[A_1,X_3]=X_4,\\
\lbrack A_1,X_4]=X_2+X_3,\\
\lbrack X_1,X_2]=-[X_1,X_3]=-bX_4,\\
\lbrack X_2,X_4]=-[X_3,X_4]=bX_1-\tilde{R}_{344}^3A_1,\\
\lbrack X_1,X_4]=-b\left( X_2+X_3\right) .
\end{array}
\]
This corresponds to the case
$\beta=\delta=0$ of the family in Theorem
\ref{dosdos}-2.
\end{itemize}

\subsubsection{Case a2)}

Suppose that a curvature transformation
$A=\tilde{R}(X,Y)$ exists such that
\begin{equation}
AX_{1}=-\alpha X_{2},\quad
AX_{2}=\alpha X_{1},\quad
AX_{3}=-\beta X_{4},\quad
AX_{4}=\beta X_{3}.
\label{A_Neutral_case_a2}
\end{equation}
If $\alpha\neq0$, by applying
$A\cdot\tilde{T}=0$ to
$X_{1},X_{2}$ and to
$X_{3},X_{4}$, and taking
(\ref{TtildaNeutral_orthonormal}) and
(\ref{A_Neutral_case_a2}) into account
we deduce: $a=b=c=d=0$. Therefore $M$
is symmetric.

For $\alpha=0$ the condition
$A\cdot\tilde{T}=0$ only gives that $a=b=0$ in
(\ref{TtildaNeutral_orthonormal}).
As we are considering $\tilde{T}\neq0$, we
have $c^2+d^2\neq0$. In that case,
if we take $W=\mathrm{span}\{cX_1+dX_2,X_3,X_4\}$
we conclude that $M$ is decomposable by virtue
of Proposition \ref{Decomposition}.

\subsubsection{Case a3)\label{case a3}}

Suppose that a curvature transformation
$A=\tilde{R}(X,Y)$ exists such that
\[
AX_1=\beta X_3,\quad
AX_2=\alpha X_4,\quad
AX_3=\beta X_1,\quad
AX_4=\alpha X_2.
\]
If $\beta\neq0$, the condition
$A\cdot\tilde{T}=0$ gives $a=b=c=d=0$.
Therefore $M$ is symmetric.

For $\beta=0$ the condition $A\cdot\tilde{T}=0$
now gives $a=c=0$ in (\ref{TtildaNeutral_orthonormal}).
As we are considering $\tilde{T}\neq0$, we
have $b^2+d^2\neq0$.

If $d^2-b^2\neq0$ and we take
$W=\mathrm{span}\{X_2,X_4,bX_1+dX_3\}$
we conclude that $M$ is decomposable
by virtue of Proposition \ref{Decomposition}.
We thus assume $d=\eta b$, $\eta=\pm1$. In this case,
from straightforward--but rather long--computations, we get
\[
\left\{
\begin{array}
[c]{l}
\tilde{R}(X_1,X_2)
=\tilde{R}_{122}^{1}M
+\tilde{R}_{142}^{1}N
+\tilde{R}_{124}^{2}A_{(\beta=0)},\\
\tilde{R}(X_1,X_4)
=\tilde{R}_{142}^{1}M
+\tilde{R}_{144}^{1}N
+\tilde{R}_{144}^{2}A_{(\beta=0)},\\
\tilde{R}(X_2,X_4)
=\tilde{R}_{124}^{2}M
+\tilde{R}_{144}^{2}N
+\tilde{R}_{244}^{2}A_{(\beta=0)},\\
\tilde{R}(X_1,X_3)=0,\\
\tilde{R}(X_3,X_j)
=-\eta\tilde{R}(X_1,X_j),
\quad j=2,4,
\end{array}
\right.
\]
where
\begin{equation*}
M=\left(
\begin{array}
[c]{cccc}
0 & 1 & 0 & 0\\
-1 & 0 & \eta & 0\\
0 & \eta & 0 & 0\\
0 & 0 & 0 & 0
\end{array}
\right)  \text{ and }N=\left(
\begin{array}
[c]{cccc}
0 & 0 & 0 & 1\\
0 & 0 & 0 & 0\\
0 & 0 & 0 & \eta\\
1 & 0 & -\eta & 0
\end{array}
\right)  .
\label{MNneutral}
\end{equation*}
By applying $A\cdot\tilde{R}=0$ to
$(X_1,X_2,X_i)$, $i=1,2$, we obtain:
$\tilde{R}_{142}^{1}=\tilde{R}_{144}^{2}=0$,
$\tilde{R}_{144}^{1}=-\tilde{R}_{122}^{1}$.
With the choice $(X_2,X_4,X_4)$ we get
$\tilde{R}_{124}^{2}=0$. One can easily
prove that with these four conditions
($\tilde{R}_{142}^{1}=\tilde{R}_{144}^{2}
=\tilde{R}_{124}^{2}=0$,
$\tilde{R}_{144}^{1}=-\tilde{R}_{122}^{1}$)
are equivalent to the condition
$A\cdot\tilde{R}=0$. Furthermore,
from condition
$\tilde{R}(X_1,X_4)\cdot\tilde{R}=0$
we obtain
$\tilde{R}_{122}^{1}\tilde{R}_{244}^{2}=0$.
As we assume that the manifold is non symmetric,
either
$\tilde{R}_{244}^{2}=0$,
$\tilde{R}_{122}^{1}\neq0$ or
$\tilde{R}_{244}^{2}\neq0$,
$\tilde{R}_{122}^{1}= 0$.
The former case has been already studied
in \S \ref{Case a 1}, for $M$ is a matrix of type a1.
Then $\tilde{R}_{244}^{2}\neq0$ and
$\mathfrak{h}=\mathrm{span}\{A_{3}\}$
as in Proposition \ref{Neutral} with $\beta=0$,
$\alpha=1$. By using (\ref{bracket})
the non-vanishing brackets of the Lie
algebra
$\mathfrak{g}=\mathfrak{m}+\mathfrak{h}$ are
\[
\begin{array}
[c]{ll}
\lbrack A_3,X_2]=X_4, & [A_3,X_4]=X_2,\\
\lbrack X_1,X_2]=-bX_4, & [X_1,X_4]=-bX_2,\\
\lbrack X_2,X_3]=-dX_4, & [X_3,X_4]=dX_2,\\
\lbrack X_2,X_4]=bX_1+dX_3+\tilde{R}_{244}^2A_3. &
\end{array}
\]
Letting $T_1=X_1+bA_3$,
$T_2=X_3-dA_3$,
$Y_1=bX_1+dX_3+\lambda A_3$,
with $\lambda=\tilde{R}_{244}^{2}$,
since $\lambda \neq0$,
a basis of the same Lie algebra is given by
$(T_1,T_2,Y_1,X_2,X_4)$
and the only non-null brackets are
\[
[Y_1,X_2]=\lambda X_4,\quad
[Y_1,X_4]=\lambda X_2,\quad
[X_2,X_4]=Y_1,
\]
that is, $Y_1,X_2,X_4$ generate
$\mathfrak{sl}(2,\mathbb{R})$, and
$\mathfrak{g}$ is the direct sum of the
$2$-dimensional Abelian Lie algebra
$\mathrm{span}\{T_1,T_2\}$ and
$\mathfrak{sl}(2,\mathbb{R})$. The
corresponding simply connected Lie
group is thus the direct product
$\widetilde{SL(2,\mathbb{R})}\times\mathbb{R}^{2}$.

\subsection{Irreducible cases}

For a basis $\left(  X_1,X_2,X_3,X_4\right) $ of
$T_oM$ such that
\[
\langle X_2,X_3\rangle =-\langle X_1,X_4 \rangle =1\text{ and
}\langle X_i,X_j\rangle =0, \quad \text{otherwise,}
\]
condition (\ref{NaturallyReductiveCondition}) gives
\begin{equation}
\left\{
\begin{array}
[c]{l}
\tilde{T}(X_1,X_2)=cX_1-aX_2,\\
\tilde{T}(X_1,X_3)=dX_1+aX_3,\\
\tilde{T}(X_1,X_4)=dX_2+cX_3,\\
\tilde{T}(X_2,X_3)=-bX_1+aX_4,\\
\tilde{T}(X_2,X_4)=-bX_2+cX_4,\\
\tilde{T}(X_3,X_4)=bX_3+dX_4.
\end{array}
\right.
\label{TtildaNeutral2}
\end{equation}

\subsubsection{Case b1)}

Suppose that a curvature transformation
$A=\tilde{R}(X,Y)$ exists such that
\begin{equation}
AX_1=\nu X_2,\quad
AX_2=-\nu X_1,\quad
AX_3=X_1+\nu X_4,\quad
AX_4=X_2-\nu X_3.
\label{A_Neutral_case_b1}
\end{equation}
If $\nu\neq0$, by applying $A\cdot\tilde{T}=0$ to $X_i,X_j$,
$1\leq i<j\leq4$, and taking (\ref{TtildaNeutral2}) and
(\ref{A_Neutral_case_b1}) into account we get $a=b=c=d=0$, hence
$\tilde{T}=0$ and therefore $M$ is symmetric. If $\nu=0$, we just
get $a=c=0$. As we are considering $\tilde{T}\neq0$, at least one
of $b$ and $d$ is different to $0$. We can assume that $b\neq0$
(if $b=0$, the new basis $(-X_2,X_1,-X_4,X_3)$ preserves the
metric and the expression of $A$ but switches $b$ to $d$). In this
case from the Bianchi identities (\ref{Bianchi1}),
(\ref{Bianchi2}) and imposing $A\cdot\widetilde{R}=0$, we obtain:
\[
\left\{
\begin{array}
[c]{l}
\tilde{R}(X_1,X_2)=0,\\
\tilde{R}(X_1,X_i)
=-\frac{d}{b}\tilde{R}(X_2,X_i),\quad i=3,4,\\
\tilde{R}(X_2,X_3)
=-\frac{d}{b}\tilde{R}(X_2,X_4),\\
\tilde{R}(X_2,X_4)
=\tilde{R}_{244}^{3}B+\tilde{R}_{344}^{3}B_1,\\
\tilde{R}(X_3,X_4)
=\tilde{R}_{344}^{3}B+\tilde{R}_{344}^{2}B_1,
\end{array}
\right.
\]
where $B_1$ is as in
Proposition \ref{Neutral} with $\nu=0$ and
\begin{equation}
B=\left(
\begin{array}
[c]{cccc}
-\frac{d}{b} & 1 & 0 & 0\\
-\frac{d^{2}}{b^{2}} & \frac{d}{b} & 0 & 0\\
0 & 0 & -\frac{d}{b} & 1\\
0 & 0 & -\frac{d^{2}}{b^{2}} & \frac{d}{b}
\end{array}
\right)  .
\label{B}
\end{equation}
One can check that both $B\pm B_{1}$
are reducible matrix equivalent to
$A_{1}$ in Proposition \ref{Neutral}
(note that
$\mathrm{span}\{\mp X_2+X_3+\frac{d}{b}X_4\}$
is an invariant and non-degenerate subspace).
This means that
$\mathfrak{h}=\mathrm{span}\{B,B_{1}\}$
is also generated by the two reducible
endomorphisms $B^{\prime}=B+B_{1}$ and
$B^{\prime\prime }=B-B_{2}$.
This case has been already studied in \ref{Case a 1}.
This implies that $\mathfrak{g}$ and
$\mathfrak{h}$ must be as in case \ref{Case a 1} above.

\subsubsection{Case b2)}

Suppose that a curvature transformation
$A=\tilde{R}(X,Y)$ exists such that
\begin{equation}
AX_{1}=\lambda X_{1},\quad
AX_{2}=-\lambda X_{2},\quad
AX_{3}=X_{1}+\lambda X_{3}, \quad
AX_{4}=X_{2}-\lambda X_{4},
\label{A_Neutral_case_b2}
\end{equation}
with $\lambda\neq0$. By applying
$A\cdot\tilde{T}=0$ to $X_1,X_2$ and to
$X_3,X_4$, and taking (\ref{TtildaNeutral2})
and (\ref{A_Neutral_case_b2})
into account we deduce: $a=b=c=d=0$,
hence $\tilde{T}=0$ and therefore $M$ is symmetric.

\subsubsection{Case b3)}

Suppose that a curvature transformation
$A=\tilde{R}(X,Y)$ exists such that
\begin{equation}
\left\{
\begin{array}
[c]{ll}
AX_{1}=\xi X_{2}+\nu X_{4}, &
AX_{2}=\xi X_{1}+\nu X_{3},\\
AX_{3}=-\nu X_{3}+\xi X_{4}, &
AX_{4}=-\nu X_{1}+\xi X_{3},
\end{array}
\right.
\label{A_Neutral_case_b4}
\end{equation}
with $\xi\cdot\nu\neq0$. By applying
$A\cdot\tilde{T}=0$ to $X_{i},X_{j}$ for
$1\leq i<j\leq4$, and taking (\ref{TtildaNeutral2}) and
(\ref{A_Neutral_case_b4}) into account we obtain:
$a=b=c=d=0$ and therefore
$M$ is symmetric.

\section{Study of the new manifolds}

In this section we analyze the geometry of the manifolds given in
Theorems \ref{Loren} and \ref{dosdos}. We prove that in the
generic case they are not symmetric nor decomposable. For that
purpose, we need the computation of the covariant derivative of
the curvature tensor and the holonomy of the Levi-Civita
connection. With respect to the former, from (\ref{2}) and
(\ref{D}) we have
\begin{align}
(\nabla_{X}R)\left(  Y,Z\right)  W &
=-\tfrac{1}{2}\tilde{T}(X,R(Y,Z)W)
+\tfrac{1}{2}R(\tilde{T}(X,Y),Z)W
\label{nablaR}\\
&
+\tfrac{1}{2}R(Y,\tilde{T}(X,Z))W
+\tfrac{1}{2}R(Y,Z)\tilde{T}(X,W).
\nonumber
\end{align}
For the latter, we recall that (see \cite[X, Corollary 4.5]{KN})
the holonomy algebra of a reductive homogeneous manifold $M=G/H$,
with $H$ being the isotropy of $o\in M$, is the smallest
subalgebra $\mathfrak{hol}\subset so(\mathfrak{m},g_o)$ containing
the $R(X,Y)_o$, $X,Y\in\mathfrak{m}$, such that
$[\Lambda_{\mathfrak{m}}(X),\mathfrak{hol}]
\subset\mathfrak{hol}$, for all $X\in\mathfrak{m}$, where
$\Lambda_{\mathfrak{m}}(X) \colon\mathfrak{m}\rightarrow
\mathfrak{m}$ is $\Lambda_{\mathfrak{m}}(X)(Y)
=\frac{1}{2}[X,Y]_{\mathfrak{m}}$. Note that if the holonomy
algebra does not posses any proper non-degenerate invariant
subspace, the manifold must be indecomposable. We then have the
following results.

\begin{proposition}
\label{Case 1}The Lorentzian manifold $G/H$ in Theorem
\ref{Loren}-2, with $c\neq 0$ and $\alpha\delta-\beta^2\neq 0$, is
flat if and only if $\beta=0$ and
$\alpha=\delta=\tfrac{1}{4}c^{2}$. Otherwise it is indecomposable.
Furthermore, it is symmetric if and only if $\beta=0$ and
$\alpha=\delta$.

The subalgebra $\mathrm{span}\{X_1,X_2,X_3,X_4,A\}$ for $\beta
=\delta=0$ which corresponds to \ref{Case a},
$\beta^2=\alpha\delta$, is also non-symmetric and indecomposable.
\end{proposition}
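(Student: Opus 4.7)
The plan is to compute the Riemann curvature tensor $R$ of the Levi-Civita connection via (\ref{R}) and (\ref{D}) directly from the structure constants of Theorem \ref{Loren}-2, and then treat flatness, symmetry and indecomposability separately.

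First, from (\ref{1}) and the brackets of Theorem \ref{Loren}-2 I read off $\tilde T$ and $\tilde R$ at the origin: $\tilde T(X_1,X_3)=cX_4$, $\tilde T(X_1,X_4)=-cX_3$, $\tilde T(X_3,X_4)=-c(X_1+X_2)$, $\tilde T(X_1,X_2)=0$, with $\tilde T(X_2,X_j)=-\tilde T(X_1,X_j)$ for $j=3,4$; and $\tilde R(X_1,X_3)=-(\alpha A+\beta B)$, $\tilde R(X_1,X_4)=-(\beta A+\delta B)$, $\tilde R(X_3,X_4)=0$, with $A$, $B$ acting on $\mathfrak{m}$ as prescribed by the brackets $[A,X_i]$, $[B,X_i]$. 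Setting $D_XY=-\tfrac12\tilde T(X,Y)$ and substituting in (\ref{R}) produces $R$ as a finite list of endomorphisms of $\mathfrak{m}$. Imposing $R\equiv 0$ gives a small system of polynomial equations in $c,\alpha,\beta,\delta$: the cross terms force $\beta=0$, and the diagonal terms then balance the $c^2/4$ contribution coming from $[D_{X_i},D_{X_j}]$ only when $\alpha=\delta=\tfrac14c^2$, yielding the stated flatness characterisation. For symmetry, I substitute $R$ and $\tilde T$ into (\ref{nablaR}) and solve $\nabla R=0$ componentwise; the extra freedom relative to flatness collapses precisely to $\beta=0$ and $\alpha=\delta$, as claimed.

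The delicate step is indecomposability. Proposition \ref{Decomposition} does not apply here: with the explicit $\tilde T$ above, $W=\mathrm{span}\{\tilde T(X,Y)\mid X,Y\in\mathfrak{m}\}=\mathrm{span}\{X_3,X_4,X_1+X_2\}$ is degenerate for $\mathrm{diag}(-1,1,1,1)$ since $X_1+X_2$ is null. I therefore use the holonomy characterisation recalled just before the proposition: the Levi-Civita holonomy algebra $\mathfrak{hol}\subset\mathfrak{so}(\mathfrak{m},g_o)$ is the smallest subalgebra containing the operators $R(X_i,X_j)_o$ that is stable under bracketing with $\Lambda_{\mathfrak{m}}(X_k)=\tfrac12[X_k,\cdot]_{\mathfrak{m}}$. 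Using the explicit $R$ from the previous step, I would enumerate the $\mathfrak{hol}$-invariant subspaces of $\mathfrak{m}$ and verify that, outside the flat and symmetric loci, the only proper invariant subspaces are degenerate ones built around the null direction $\mathbb{R}(X_1+X_2)$. By the de Rham--Wu theorem this rules out any metric-product splitting, which is exactly indecomposability.

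The same approach handles the final assertion about $\mathrm{span}\{X_1,X_2,X_3,X_4,A\}$ with $\beta=\delta=0$ (the sub-case $\beta^2=\alpha\delta$ of \ref{Case a}): the torsion $\tilde T$ is unchanged, so the same null direction $X_1+X_2$ obstructs Proposition \ref{Decomposition}, and the holonomy computation, now with $\mathfrak{h}$ one-dimensional, again leaves no proper non-degenerate subspace invariant; a direct check of $\nabla R$ confirms non-symmetry whenever $(\alpha,c)$ lies outside the locus identified above. The main obstacle in all of this is the last step: the curvature calculation is routine multilinear algebra once (\ref{R}) and (\ref{nablaR}) are set up, but the verification that $\mathfrak{hol}$ admits no proper non-degenerate invariant subspace requires a careful case analysis in the parameters $(c,\alpha,\beta,\delta)$, since one must distinguish the genuinely indecomposable generic case from accidental parameter values where $\mathfrak{hol}$ could stabilise a non-degenerate complement or collapse into the already identified flat or symmetric loci.
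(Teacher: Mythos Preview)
Your plan is correct and matches the paper's proof: compute $R$ from (\ref{R}) and (\ref{D}), read off the flatness condition, test $\nabla R$ via (\ref{nablaR}) for symmetry, and settle indecomposability through the holonomy algebra. The paper streamlines your final step by observing that $[\Lambda_{\mathfrak{m}}(X_1),A]=-\tfrac{c}{2}B$ and $[\Lambda_{\mathfrak{m}}(X_1),B]=\tfrac{c}{2}A$, which forces $\mathfrak{hol}=\mathrm{span}\{A,B\}$ in every non-flat case (even when the curvature operators themselves span only a line), so the invariant-subspace check reduces to a single pair of matrices rather than a parameter-by-parameter analysis; note also that indecomposability must be established on the full non-flat locus, including the symmetric values $\beta=0$, $\alpha=\delta\neq\tfrac14c^2$.
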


\begin{proof}
Taking (\ref{R}) and (\ref{D}) into account we get
\[
\left\{
\begin{array}
[c]{ll}
R(X_1,X_2)=R(X_3,X_4)=0,\quad & R(X_1,X_3)=(\alpha-\tfrac{1}{4}c^{2})A+\beta B, \\
R(X_1,X_4)=\beta A+(\delta-\tfrac{1}{4}c^{2})B,\quad &
R(X_2,X_j)=-R(X_1,X_j), \quad j=3,4,
\end{array}
\right.
\]
and we get the condition about flatness. For the covariant
derivative of the curvature, from (\ref{nablaR}) we have
$(\nabla_{X_1}R)\left( X_1,X_3\right) X_1=c\beta X_3+\tfrac{1}{2}c
\left( \delta-\alpha\right) X_4,$ which only vanishes for
$\beta=0$ and $\alpha=\delta$. In that case, it is easy to see
that $(\nabla _{X_i}R)(X_j,X_k)X_l=0$, for all $i,j,k,l$ and $M$
is locally symmetric.

If
$(\alpha-\tfrac{1}{4}c^2)(\delta-\tfrac{1}{4}c^2)-\beta^2\neq0$,
in view of the expressions of $R(X_1,X_3)$ and $R(X_1,X_4)$ above,
condition $[\Lambda_{\mathfrak{m}}(X),\mathfrak{hol}]
\subset\mathfrak{hol}$ gives
$\mathfrak{hol}=\mathrm{span}\{A,B\}$. These matrices do not have
any common invariant non-degenerate subspace and therefore $M$ is
irreducible.

If
$(\alpha-\tfrac{1}{4}c^2)(\delta-\tfrac{1}{4}c^2)-\beta^2=0$,
then $R(X,Y)_{o}$ is generated by a single element,
for instance $(\alpha-\tfrac
{1}{4}c^{2})A+\beta B$. On the other hand,
one can check that
$[\Lambda_{\mathfrak{m}}(X_1),A]=-\tfrac{c}{2}B$,
$[\Lambda_{\mathfrak{m}}(X_1),B]=\tfrac{c}{2}A$.
Then condition
$[\Lambda_{\mathfrak{m}}(X_1),\mathfrak{hol}]
\subset\mathfrak{hol}$
gives again $\mathfrak{hol}=\mathrm{span}\{A,B\}$
unless $\alpha-\tfrac{1}{4}c^2=\delta-\tfrac{1}{4}c^2=\beta=0$.

The study of the subalgebra
$\mathrm{span}\{X_{1},X_{2},X_{3},X_{4},A\}$ for
$\beta=\delta=0$ is done similarly.
\end{proof}

\begin{proposition}
\label{SL} The Lorentzian manifold
$(SL(2,\mathbb{R})\times\mathbb{R}^{2})/\mathbb{R}$ in Theorem
\ref{Loren}-1 defined infinitesimally by the Lie brackets
\[
\begin{array}
[c]{ll}
\lbrack A,X_3]=-X_4, &
\lbrack A,X_4]=X_3,\\
\lbrack X_1,X_3]=-cX_4, &
\lbrack X_1,X_4]=cX_3,\\
\lbrack X_2,X_3]=-\eta cX_4, &
\lbrack X_2,X_4]=\eta cX_3,\\
\lbrack X_3,X_4]=cX_1-\eta cX_2+\alpha A, &
\end{array}
\]
as in \S \ref{Case b} with $c\neq0$,
$\eta=\pm1$, $\alpha\neq0$, is not flat,
non-symmetric and indecomposable.
\end{proposition}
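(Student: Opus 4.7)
The plan is to mirror the strategy used for Proposition~\ref{Case 1}: translate the structure constants into the canonical torsion and curvature at $o$, apply (\ref{R}) and (\ref{D}) to obtain the Levi-Civita curvature, use (\ref{nablaR}) for $\nabla R$, and compute the holonomy algebra $\mathfrak{hol}$ via the criterion $[\Lambda_{\mathfrak{m}}(X),\mathfrak{hol}]\subset\mathfrak{hol}$ recalled just before the statement. First I read off from (\ref{bracket}) that the only nonzero value of $\tilde{R}$ is $\tilde{R}(X_{3},X_{4})=\alpha A$, while the nonzero components of $\tilde{T}$ are
\[
\tilde{T}(X_{1},X_{3})=cX_{4},\ \tilde{T}(X_{1},X_{4})=-cX_{3},\ \tilde{T}(X_{2},X_{3})=\eta cX_{4},\ \tilde{T}(X_{2},X_{4})=-\eta cX_{3},
\]
together with $\tilde{T}(X_{3},X_{4})=-cX_{1}+\eta cX_{2}$. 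By (\ref{D}) this produces $4\times 4$ skew-symmetric matrices $D_{X_{i}}$ in explicit form.

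Next I would apply (\ref{R}) to each pair $(X_{i},X_{j})$. A short calculation using $\eta^{2}=1$ shows $[D_{X_{3}},D_{X_{4}}]=0$ and $D_{\tilde{T}(X_{3},X_{4})}=-cD_{X_{1}}+\eta cD_{X_{2}}=0$, so $R(X_{3},X_{4})=\alpha A\neq 0$, which proves non-flatness. For the non-symmetry, I would use (\ref{nablaR}) and evaluate a single judicious sample such as $(\nabla_{X_{3}}R)(X_{3},X_{4})X_{3}$ or $(\nabla_{X_{1}}R)(X_{3},X_{4})X_{3}$; since $\tilde{T}$ entangles the two planes $\mathrm{span}\{X_{1},X_{2}\}$ and $\mathrm{span}\{X_{3},X_{4}\}$, whereas $R(X_{3},X_{4})$ acts only on the latter, at least one such expression survives with a nonzero coefficient in $c\alpha$.

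For indecomposability I compute $\mathfrak{hol}$ inductively. The subspace $\mathrm{span}\{R(X_{i},X_{j})_{o}\}$ already contains $\alpha A$, and closure under $[\Lambda_{\mathfrak{m}}(X_{k}),\cdot]=[D_{X_{k}},\cdot]$ produces further generators. An easy check gives $[D_{X_{1}},A]=[D_{X_{2}},A]=0$ but $[D_{X_{3}},A]$ is the nonzero skew-symmetric endomorphism
\[
E\colon\ X_{1}\mapsto -\tfrac{c}{2}X_{3},\quad X_{2}\mapsto -\tfrac{\eta c}{2}X_{3},\quad X_{3}\mapsto -\tfrac{c}{2}X_{1}+\tfrac{\eta c}{2}X_{2},\quad X_{4}\mapsto 0,
\]
so $\mathfrak{hol}$ strictly contains $\mathrm{span}\{A\}$; a symmetric computation yields a further generator from $[D_{X_{4}},A]$. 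I would then argue that the combined generators $A, E, \ldots$ admit no proper non-degenerate common invariant subspace in the Lorentzian $(\mathfrak{m},\langle\cdot,\cdot\rangle)$, whence indecomposability.

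The main obstacle is the last step: while $A$ alone preserves the splitting $\mathrm{span}\{X_{1},X_{2}\}\oplus\mathrm{span}\{X_{3},X_{4}\}$, one must show that $E$ and the further brackets break every non-degenerate $A$-invariant plane or hyperplane. This is a case-by-case check on the eigenstructure of the enlarged set of generators, and it is here that the hypothesis $c\neq 0$ and $\alpha\neq 0$ plays a decisive role, ensuring that the off-diagonal coupling between the two canonical planes persists in the Levi-Civita holonomy.
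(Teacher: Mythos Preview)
Your plan is correct and follows essentially the same strategy as the paper. The paper computes all the Levi-Civita curvature operators at once, obtaining $R(X_{1},X_{3})=-\tfrac{1}{4}c^{2}M$, $R(X_{1},X_{4})=-\tfrac{1}{4}c^{2}N$, $R(X_{2},X_{j})=\eta R(X_{1},X_{j})$ and $R(X_{3},X_{4})=\alpha A$ with $M,N$ as in (\ref{MN}), and then reads off $\mathfrak{hol}=\mathrm{span}\{M,N,A_{3}\}$ directly; for non-symmetry it exhibits $(\nabla_{X_{3}}R)(X_{1},X_{3})X_{3}=-\tfrac{1}{2}c\alpha X_{4}$. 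Your route to the holonomy, starting from $R(X_{3},X_{4})=\alpha A$ and enlarging via $[\Lambda_{\mathfrak{m}}(X_{3}),A]$ and $[\Lambda_{\mathfrak{m}}(X_{4}),A]$, lands on exactly the same span: your $E$ equals $-\tfrac{c}{2}M$ and the analogous bracket with $X_{4}$ equals $-\tfrac{c}{2}N$, so the two computations coincide. One caution: to evaluate any sample of $(\nabla_{X}R)(Y,Z)W$ via (\ref{nablaR}) you will need the off-block operators $R(X_{1},X_{3})$, $R(X_{1},X_{4})$ anyway (they enter through $R(\tilde{T}(X,\cdot),\cdot)$), and note that your second suggested sample $(\nabla_{X_{1}}R)(X_{3},X_{4})X_{3}$ actually vanishes, whereas $(\nabla_{X_{3}}R)(X_{3},X_{4})X_{3}=-\tfrac{1}{2}c\alpha\,X_{1}+\tfrac{1}{2}\eta c\alpha\,X_{2}$ does the job.
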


\begin{proof}
Taking (\ref{R}) and (\ref{D})
into account we get
\[
\left\{
\begin{array}
[c]{ll}
R(X_1,X_2)=0,\quad & R(X_1,X_3)=-\tfrac{1}{4}c^2M,\\
R(X_1,X_4)=-\tfrac{1}{4}c^2N,\quad &R(X_2,X_j)=\eta R(X_1,X_j), j=3,4,\\
R(X_3,X_4)=\alpha A,
\end{array}
\right.
\]
where $M$, $N$ are defined in (\ref{MN}). Moreover, from
(\ref{nablaR}), we have
\[
(\nabla_{X_3}R)\left(  X_1,X_3\right)
X_3=-\tfrac{1}{2}c\alpha X_4,
\]
so that the manifold is not locally symmetric.

In view of the expressions
of $R(X_i,X_j)$ above, condition
$[\Lambda_{\mathfrak{m}}(X),\mathfrak{hol}]
\subset\mathfrak{hol}$ gives
$\mathfrak{hol}=\mathrm{span}\{M,N,A\}$.
It is easy to see that these matrices do not share
any common invariant non-degenerate subspace.
\end{proof}

\begin{proposition}
\label{Case 2} The $(2,2)$-signature manifold $G/H$ in Theorem
\ref{dosdos}-2 with $b\neq0$, $\alpha^{2}\neq-\beta\delta$, in
theorem \ref{dosdos} is flat if and only if $\alpha=0$ and
$\beta=\delta=\tfrac{1}{4}b^2$. Otherwise it is indecomposable.
Furthermore, it is symmetric if and only if $\alpha=0$ and
$\beta=\delta$.

The subalgebra $\mathrm{span}\{X_1,X_2,X_3,X_4,A_1\}$
for $\beta=\alpha=0$ which corresponds to \ref{Case a 1},
$\alpha^2=-\beta \delta$,
is also non-symmetric and indecomposable.
\end{proposition}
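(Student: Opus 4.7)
The plan is to mirror the proof of Proposition \ref{Case 1} in the $(2,2)$ setting. Using the orthonormal basis $\{X_1,X_2,X_3,X_4\}$ of $\mathfrak{m}$ from case a1 in \S\ref{Case a 1} (where $a=0$, $c=-b$, $d=0$ in (\ref{TtildaNeutral_orthonormal}) and $\tilde R$ takes values in $\mathfrak{h}=\mathrm{span}\{A_1,B\}$), I would first plug the data into (\ref{R}) and (\ref{D}) to obtain explicit expressions for each $R(X_i,X_j)$ as a linear combination of $A_1$ and $B$. The expected shape is analogous to Proposition \ref{Case 1}: two pairs $R(X_i,X_j)$ vanish, while the remaining ones are of the form $(\beta-\tfrac{1}{4}b^{2})A_1+\alpha B$ and $\alpha A_1 + (\delta-\tfrac14 b^2)B$, with $R(X_3,X_j)=-R(X_2,X_j)$ inherited from the $\eta=-1$ identifications of case a1. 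The flatness claim then reads off directly: requiring every coefficient to vanish forces $\alpha=0$ and $\beta=\delta=\tfrac{1}{4}b^{2}$.

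For local symmetry I would compute $\nabla R$ from (\ref{nablaR}) together with the $\tilde T$ and $R$ above. The analogue of the critical component $(\nabla_{X_{1}}R)(X_{1},X_{3})X_{1}$ of Proposition \ref{Case 1} will be a linear combination of $X_3$ and $X_4$ whose coefficients are multiples of $\alpha$ and of $\beta-\delta$; these vanish simultaneously exactly when $\alpha=0$ and $\beta=\delta$. A routine check over the remaining index combinations then confirms that under these two conditions every $(\nabla_{X_{i}}R)(X_{j},X_{k})X_{l}$ indeed vanishes, so local symmetry holds precisely in this case.

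For indecomposability I would analyse the holonomy algebra via the inclusion $[\Lambda_{\mathfrak{m}}(X),\mathfrak{hol}]\subset\mathfrak{hol}$. When the non-degeneracy condition $\alpha^{2}+(\beta-\tfrac{1}{4}b^{2})(\delta-\tfrac{1}{4}b^{2})\neq 0$ holds, the curvature image already spans $\mathrm{span}\{A_{1},B\}$, which is stable under $\Lambda_{\mathfrak{m}}(X_i)$, so $\mathfrak{hol}=\mathrm{span}\{A_{1},B\}$. In the degenerate, non-flat subcase the image of $R$ is one-dimensional, but the brackets $[\Lambda_{\mathfrak{m}}(X_{i}),A_{1}]$ and $[\Lambda_{\mathfrak{m}}(X_{i}),B]$ produced from the structure constants of Theorem \ref{dosdos}-2 supply the missing generator, and again $\mathfrak{hol}=\mathrm{span}\{A_{1},B\}$ unless the flatness conditions hold. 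The subalgebra case $\beta=\delta=0$ proceeds identically with $\mathfrak{hol}=\mathrm{span}\{A_{1}\}$: a single non-vanishing component of $\nabla R$ rules out symmetry, and the invariant-subspace analysis of the single matrix $A_{1}$ yields indecomposability.

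The main obstacle is the final step in the indecomposability argument, namely proving that the pair $\{A_{1},B\}$ admits no common proper non-degenerate invariant subspace of $\mathfrak{m}$ under the metric $\mathrm{diag}(-1,-1,1,1)$. In contrast with the Lorentzian setting, neutral signature admits totally null invariant planes that are degenerate rather than impossible, so they must be ruled out by direct analysis rather than by inspection. I intend to handle this by exhibiting the generalised eigenspaces of $A_{1}$ (which has only the eigenvalue $0$ with a definite Jordan structure inherited from case a1) and of $B$, and verifying case by case that any common invariant subspace is either $\{0\}$, the whole $\mathfrak{m}$, or totally isotropic. This will close the argument and give the claimed indecomposability.
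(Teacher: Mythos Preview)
Your strategy mirrors the paper's and is essentially correct, but there is one genuine gap in the holonomy analysis that does \emph{not} occur in the Lorentzian Proposition~\ref{Case 1} and therefore cannot be handled by a straight ``mirror'' argument. In the degenerate rank-one situation you claim that the brackets $[\Lambda_{\mathfrak{m}}(X_i),\,\cdot\,]$ always supply the missing generator, so that $\mathfrak{hol}=\mathrm{span}\{A_1,B\}$ unless the manifold is flat. This is false in signature $(2,2)$: here one finds $[\Lambda_{\mathfrak{m}}(X_2),A_1]=-\tfrac{b}{2}B$ and $[\Lambda_{\mathfrak{m}}(X_2),B]=-\tfrac{b}{2}A_1$, so $\Lambda_{\mathfrak{m}}(X_2)$ acts on $\mathrm{span}\{A_1,B\}$ with \emph{real} eigenvectors $A_1\pm B$ (in the Lorentzian case the analogous action is a rotation, with no real eigenvectors, which is why the issue never arose there). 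Hence when the single curvature generator happens to be proportional to $A_1\pm B$---equivalently when $\beta-\tfrac14 b^2=\pm\alpha$---the bracket with $\Lambda_{\mathfrak{m}}(X_2)$ does \emph{not} enlarge the algebra, and $\mathfrak{hol}$ stays one-dimensional. You then need a separate check that $A_1\pm B$ alone has no proper non-degenerate invariant subspace; this is what the paper does, and without it your indecomposability argument is incomplete.

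A couple of minor points you will discover when you actually carry out the computations: the nonvanishing curvature operators in this case are $R(X_1,X_2)$ and $R(X_2,X_4)$ (not the $R(X_1,X_3)$, $R(X_1,X_4)$ pattern of the Lorentzian case), the informative $\nabla R$ component lands in $\mathrm{span}\{X_1,X_4\}$ rather than $\mathrm{span}\{X_3,X_4\}$, and the subalgebra case in the statement is $\alpha=\beta=0$, not $\beta=\delta=0$. These are harmless once you compute, but they underscore that the index pattern really is different from Case~\ref{Case a}.
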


\begin{proof}
Taking (\ref{R}) and (\ref{D}) into account we get
\[
\left\{
\begin{array}
[c]{ll} R(X_1,X_2)=-\alpha A_1 -\left( \beta-\tfrac{1}{4}b^{2}
\right)  B,\quad & R(X_2,X_4)= -\left( \delta-\tfrac{1}{4}b^2
\right)  A+\alpha B,\\
R(X_2,X_3)=R(X_1,X_4)=0,\quad & R(X_3,X_j)=-R(X_2,X_j), \quad
j=1,4,
\end{array}
\right.
\]
where $B$ is defined in (\ref{B}) and we get the condition about
flatness. From (\ref{nablaR}), we have $(\nabla_{X_2}R)\left(
X_3,X_1\right) X_2=-b\alpha X_1+\tfrac{1}{2}b \left(
\beta-\delta\right)  X_4,$ which only vanishes for $\alpha=0$ and
$\beta=\delta$. In that case, it is easy to see that all $(\nabla
_{X_i}R) (X_j,X_k)X_l=0$, for all $i,j,k,l$ and $M$ is locally
symmetric.

If $(\beta-\tfrac{1}{4}b^2)(\delta-\tfrac{1}{4}b^2)-\alpha^2 \neq
0$, in view of the expressions of $R(X_1,X_2)$ and $R(X_2,X_4)$
above, condition
$[\Lambda_{\mathfrak{m}}(X_2),\mathfrak{hol}]\subset
\mathfrak{hol}$ gives $\mathfrak{hol}=\mathrm{span}\{A,B\}$. These
matrices do not have any common invariant non-degenerate subspace
and therefore $M$ is irreducible. If
$(\beta-\tfrac{1}{4}b^2)(\delta-\tfrac{1}{4}b^2) -\alpha^2=0$,
then $R(X,Y)_{o}$ is generated by a single element, for instance
$\alpha A_1+(\beta-\tfrac{1}{4}b^2)B$. On the other hand, one can
check that $[\Lambda_{\mathfrak{m}}(X_2),A]=-\tfrac{b}{2}B$,
$[\Lambda_{\mathfrak{m}}(X_2),B]=-\tfrac{b}{2}A$. Then condition
$[\Lambda_{\mathfrak{m}}(X_2),\mathfrak{hol}]\subset\mathfrak{hol}$
gives $\mathfrak{hol}=\mathrm{span}\{A,B\}$ unless
$\beta-\tfrac{1}{4}b^{2}=\pm\alpha$. Otherwise,
$\mathfrak{hol}=\mathrm{span}\{A_1\pm B\}$ which do not have
invariant non-degenerate subspaces.

The study of the subalgebra
$\mathrm{span}\{X_1,X_2,X_3,X_4,A_1\}$
for $\beta=\alpha=0$ is done similarly.
\end{proof}

\begin{proposition}
The $(2,2)$-signature manifold
$(SL(2,\mathbb{R})\times\mathbb{R}^{2})/\mathbb{R}$
in Theorem \ref{dosdos}-1 defined infinitesimally
by the Lie brackets
\[
\begin{array}
[c]{ll}
\lbrack A,X_2]=X_4, & [A,X_4]=X_2,\\
\lbrack X_1,X_2]=-bX_4, & [X_1,X_4]=-bX_2,\\
\lbrack X_2,X_3]=-\eta bX_4, & [X_3,X_4]=\eta bX_3,\\
\lbrack X_3,X_4]=b(X_1+\eta X_3)+\alpha A, &
\end{array}
\]
as in \S \ref{case a3} with $b\neq0$,
$\eta=\pm1$, $\alpha\neq0$, in is
non-symmetric and indecomposable.
\end{proposition}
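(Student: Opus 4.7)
The plan is to mirror exactly the argument used for Proposition \ref{SL}, adapted to the $(2,2)$-signature setting of case a3. The proposition asserts two things, non-symmetric and indecomposable, and both will follow once the Riemann curvature tensor $R$, its covariant derivative $\nabla R$, and the holonomy algebra have been read off from the structure constants.

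First I would read off $\tilde{T}$ and $\tilde{R}$ from the given brackets using (\ref{1}): the $\mathfrak{m}$-parts of $[X_i,X_j]$ give $-\tilde{T}(X_i,X_j)$ and the $\mathfrak{h}$-parts give $-\tilde{R}(X_i,X_j)$. This matches the data of \S \ref{case a3} with $a=c=0$, $d=\eta b$, and $\tilde{R}_{244}^{2}=\alpha$. Then plug into (\ref{R}) with $D_{X}=-\tfrac{1}{2}\tilde{T}(X,\cdot)$ to get the Levi-Civita curvature. The outcome will be analogous to the Lorentzian case: $R(X_1,X_3)=R(X_2,X_4)$-type terms vanish where forced by symmetry, and the nontrivial ones are proportional to the matrices $M$ and $N$ from \S \ref{case a3} together with an $\alpha A_3$ summand coming from $R(X_2,X_4)$. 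So $R(X_i,X_j)_o\in \mathrm{span}\{M,N,A_3\}$.

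Next, for non-symmetry, I would pick a single well-chosen index tuple and apply formula (\ref{nablaR}). In analogy with Proposition \ref{SL}, $(\nabla_{X_4}R)(X_2,X_4)X_4$ (or a similar combination involving one of the $X_i$ that appears in $[X_2,X_4]$) should collapse, after cancellations among the four terms of (\ref{nablaR}), to something proportional to $b\alpha$. Since $b\neq 0$ and $\alpha\neq 0$, this shows $\nabla R\not\equiv 0$, hence $M$ is not locally symmetric.

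For indecomposability, I would use the holonomy characterization recalled before Proposition \ref{Case 1}: $\mathfrak{hol}$ is the smallest subalgebra of $\mathfrak{so}(\mathfrak{m},g_o)$ containing the $R(X,Y)_o$ and closed under $[\Lambda_{\mathfrak{m}}(X),\cdot]$. Starting from $\mathrm{span}\{M,N,A_3\}\subset\mathfrak{hol}$, compute the brackets $[\Lambda_{\mathfrak{m}}(X_i),M]$, $[\Lambda_{\mathfrak{m}}(X_i),N]$, $[\Lambda_{\mathfrak{m}}(X_i),A_3]$ using the Lie algebra brackets of $\mathfrak{g}$; these should stay inside $\mathrm{span}\{M,N,A_3\}$, so $\mathfrak{hol}=\mathrm{span}\{M,N,A_3\}$. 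The remaining and main obstacle is then the purely linear-algebraic task of verifying that these three skew-adjoint endomorphisms of the $(2,2)$-space have no common non-degenerate proper invariant subspace. The most efficient way is to assume such a subspace $W$ exists, intersect $W$ with $\ker A_3$ and with $\mathrm{Im}\,M+\mathrm{Im}\,N$, and use the explicit action on the basis $(X_1,X_2,X_3,X_4)$ given in \S \ref{case a3} to force $W=0$ or $W=\mathfrak{m}$; the presence of the isotropic directions produced by $M$ and $N$ obstructs non-degeneracy of any intermediate subspace. Once this is done, indecomposability of $M$ follows, and the proof is complete.
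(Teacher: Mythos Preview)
Your proposal is correct and follows exactly the approach the paper intends: the paper's own proof is simply the sentence ``We do not include the proof as it is similar to the one of Proposition \ref{SL},'' and your outline is precisely that transplantation of the argument of Proposition \ref{SL} to the $(2,2)$-signature data of \S\ref{case a3}. The specific steps you list---computing $R$ from (\ref{R}) to find it spanned by $M$, $N$, $A_3$, exhibiting a nonvanishing component of $\nabla R$ proportional to $b\alpha$, and checking that $\mathfrak{hol}=\mathrm{span}\{M,N,A_3\}$ admits no common non-degenerate invariant subspace---are the direct analogues of the Lorentzian computations and carry over without new ideas.
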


We do not include the proof as
it is similar to the one of Proposition
\ref{SL}.

{\small \noindent\textbf{Authors' addresses:} }

{\small \smallskip}

{\small \noindent W.B.:
\'{E}cole Nationale Polytechnique d'Oran,
D\'{e}par\-tement de Math\'{e}matiques et Informatique,
B.P. 1523, El M'Naouar, Oran, Algeria.
\newline
\emph{E-mail:\/} \texttt{batatwafa@yahoo.fr} }

{\small \smallskip}

{\small \noindent M.C.L.: ICMAT(CISC, UAM, UC3M,
UCM),\\
Departamento de Geometr\'\i a y Topolog\'\i a, Facultad de
Matem\'aticas, Universidad Complutense de Madrid, Plaza de
Ciencias 3, 28040--Madrid, Spain.
\newline
\emph{E-mail:\/} \texttt{mcastri@mat.ucm.es}
}

{\small \smallskip}

{\small \noindent E.R.M.:
Departamento de Matem\'{a}tica Aplicada
a la Edificaci\'{o}n, al Medio Ambiente y al Urbanismo,
E.T.S. Arquitectura, U.P.M.,
Juan de Herrera 4, 28040--Madrid, Spain.
\newline\emph{E-mail:\/}\texttt{eugenia.rosado@upm.es} }

\end{document}